\documentclass[12pt,reqno]{amsart}
\usepackage[english]{babel}
\usepackage[T1]{fontenc}
\usepackage[utf8]{inputenc}
\usepackage[a4paper]{geometry}
\usepackage{graphicx} 

\usepackage{amssymb}
\usepackage{amsmath}
\usepackage{amsthm}
\usepackage{amsfonts}
\usepackage{mathrsfs}
\usepackage{mathtools}
\usepackage{mathdots}
\usepackage{subcaption}
\usepackage{xcolor}
\usepackage[
    colorlinks=true,
    linkcolor=black,
    citecolor=orange,
    urlcolor=teal,
    pdfpagelabels
]{hyperref}
\usepackage{enumitem}
\setlist[enumerate]{itemsep=0.5em, topsep=0.5em}
\setlist[itemize]{itemsep=0.5em, topsep=0.5em}

\newtheorem{theorem}{Theorem}[section]
\newtheorem{lemma}[theorem]{Lemma}
\newtheorem{prop}[theorem]{Proposition}
\newtheorem{corollary}[theorem]{Corollary}
\theoremstyle{definition}
\newtheorem{remark}[]{Remark}

\newcommand{\R}{\mathbb{R}} 
\newcommand{\dd}{\mathop{}\!\mathrm{d}}

\renewcommand{\epsilon}{\varepsilon}
\renewcommand{\theta}{\vartheta}

\newcommand{\abs}[1]{\left\lvert#1\right\rvert} 
\DeclareMathOperator{\sign}{sign}
\newcommand{\F}{\mathcal{F}}
\newcommand{\M}{\mathcal{M}}

\numberwithin{equation}{section}

\mathtoolsset{showonlyrefs}

\title[Unique positive solution for indefinite $p$-Laplacian equations]{A uniqueness result for two-step indefinite $p$-Laplacian equations}
\author{Alberto Cagnetta}
\address{Dipartimento Politecnico di Ingegneria e Architettura, Università degli Studi di Udine,
Via delle Scienze 206, 33100 Udine, Italy}
\email{alberto.cagnetta@uniud.it}
\subjclass[2020]{primary 34C25; secondary 34B15,34B16,34B18}
\keywords{Positive solutions; time maps; boundary value problem; uniqueness; indefinite weight; $p$-Laplacian.}

\begin{document}

\begin{abstract}
    Boscaggin, Feltrin and Zanolin conjectured that the Neumann and periodic problems associated with the equation 
    \begin{equation*}
        u''+a(t)u^\gamma=0
    \end{equation*}
    with a two-step indefinite weight $a(t)$ have at most one positive solution for every $\gamma\in\R\setminus\left\{-1,0,1\right\}$. We prove this conjecture and complete the uniqueness picture by also including the logarithmic potential case $\gamma=-1$. More generally, for every $p>1$, we consider the equation
    \begin{equation*}
        \left(\abs{u'}^{p-2}u'\right)'+a(t)u^\gamma=0
    \end{equation*}
    under Neumann or periodic boundary conditions and establish a complete existence and uniqueness classification for all $\gamma\in\R\setminus\left\{0,p-1\right\}$. The necessary mean-value condition $\gamma\int_{0}^{T}a(t)\dd t<0$ is also sufficient for
    existence when $\gamma\leq-1$ or $\gamma>p-1$, while an additional sharp condition on the mean of the weight is required in the intermediate range
    $\gamma\in(-1,p-1)\setminus\{0\}$. Finally, a numerical example provides evidence of multiple positive solutions for the Neumann problem whose weight has the same single change of sign but a non-constant internal profile, highlighting the role of the two-step assumption.
\end{abstract}

\maketitle

\section{Introduction}
\label{sec:introduction}

In this paper, we address the existence and uniqueness of positive solutions of the scalar $p$-Laplacian equation
\begin{equation}
    \label{eq:intro-equation}
    \left(\phi_p(u')\right)'+a(t)u^\gamma=0,
    \qquad
    \phi_{p}(s)\coloneqq \abs{s}^{p-2}s,
\end{equation}
where $p>1$, $\gamma\in\R$, and the coefficient $a$ changes sign. We consider this equation under either Neumann or periodic boundary conditions and focus on the two-step indefinite weight
\begin{equation}
    \label{eq:intro-two-step-weight}
    a(t)=
    \begin{cases}
        a_{+}>0,& t\in[0,\tau),\\
        -a_{-}<0,& t\in[\tau,T],
    \end{cases}
    \qquad 0<\tau<T.
\end{equation}

Following Hess and Kato \cite{HesKat1980}, differential equations with sign-changing weights are commonly called indefinite. The existence and multiplicity of their positive solutions depend on both the nonlinearity and the geometry of the weight.
The $p$-Laplacian is a prototypical nonlinear diffusion operator and arises, for instance, in models of flow through porous media; see \cite[Chapter~3]{KriRadVar2010}. From a stationary reaction-diffusion viewpoint, the sign of $a(t)$ distinguishes spatial regions where the reaction term is positive or negative, while Neumann conditions represent zero flux at the boundary and periodic conditions describe a periodic environment. For $\gamma<0$, the reaction term is singular at $u=0$. Nevertheless, the homogeneity of both the differential operator and the power nonlinearity allows positive and negative exponents to be treated within a common framework.

Inverse-power nonlinearities also occur in related boundary value problems and models. Kaufmann and Medri \cite{KauMed2016} studied indefinite Dirichlet problems for the scalar $p$-Laplacian with a singular power nonlinearity. Inverse-square singularities also arise in stationary electrostatic MEMS models, where the singular term depends on the gap between the membrane and the ground plate \cite{GhoGuo2007}; $p$-Laplacian variants have been considered in \cite{CasEspSci2008}. These connections provide further motivation for including the singular range in the present analysis.

Positive solutions to indefinite boundary value problems have been extensively studied, and a systematic account of topological methods is provided in \cite{Feltrin2018}. In particular, existence results for superlinear Neumann and periodic problems have been obtained through coincidence degree \cite{FelZan2015}. The question of uniqueness has received less attention and remains more difficult beyond the concave setting. Indeed, classical uniqueness results for semilinear Neumann problems rely on suitable concavity assumptions \cite{BanPozTes1988,BroHes1990}, whereas superlinear nonlinearities may give rise to multiple positive solutions when the weight has several changes of sign, with multiplicity closely related to the number of positive humps separated by negative intervals (see, for instance, \cite{Bos2011,FelLopSam2026,FelZan2017} and the references therein). In particular, multiplicity occurs for suitable stepwise coefficients under Neumann boundary conditions \cite{Bos2011}. In the superlinear Dirichlet setting, Feltrin and Troestler \cite{FelTro2026} combine uniqueness and non-degeneracy on the individual positivity intervals to obtain exact multiplicity results under suitable assumptions on the weight. These results highlight the role of the weight geometry in the uniqueness question and suggest focusing on simpler weight configurations to identify conditions ensuring uniqueness. The two-step coefficient \eqref{eq:intro-two-step-weight}, with a single change of sign and constant values on each subinterval, provides a natural setting for this purpose: its structure allows a time-map analysis aimed at a complete existence and uniqueness classification.

The starting point of the present work is the paper by Boscaggin, Feltrin and Zanolin \cite{BosFelZan2020}. For a general $\phi$-Laplacian equation with a weight of the form \eqref{eq:intro-two-step-weight}, they reduced the Neumann problem to a system of two time-map equations and established an abstract uniqueness criterion. Applying this criterion to the $p$-Laplacian with the power nonlinearity $u^\gamma$, they proved existence and uniqueness in the exponent ranges
\begin{equation}
    \gamma\leq\frac{1-2p}{p-1}
    \quad\text{or}\quad
    \gamma>p-1,
\end{equation}
under the necessary sign condition
\begin{equation}
    \label{eq:intro-sign-condition}
    \gamma\int_0^T a(t)\dd t<0.
\end{equation}
For the standard Laplacian, corresponding to $p=2$, they also obtained partial results in the remaining intervals for $\gamma$ and presented numerical evidence for the monotonicity of the relevant time-map quotient. This led them to conjecture uniqueness for $\gamma\in\R\setminus\{-1,0,1\}$. As highlighted in Remark \ref{remark:degenerate-exponents-normalized-maps}, the exponents $\gamma=0$ and $\gamma=1$ are truly degenerate for $p=2$, while $\gamma=-1$ was excluded by their integral parametrization of time maps. Thus, monotonicity remained open in some exponent range, as well as the possibility of a uniform treatment for arbitrary $p>1$.

The main result of this paper, stated in Theorem \ref{thm:main-result}, settles this conjecture, giving a complete existence and uniqueness classification for every $p>1$ and $\gamma\in\R\setminus\{0,p-1\}$, under both Neumann and periodic boundary conditions. In particular, whenever a positive solution exists, it is unique. In the outer ranges $\gamma\leq -1$ and $\gamma>p-1$, condition \eqref{eq:intro-sign-condition} is also sufficient for existence. By contrast, in the intermediate range $\gamma\in(-1,p-1)\setminus\{0\}$, another sharp condition, involving an explicit threshold $\bar{a}_{\gamma,p}^{\ast}$, must be taken into account (e.g. \eqref{eq:def-critic-mean-a}). The result also covers the exponent $\gamma=-1$, which was not included in the original conjecture. For general $p>1$, the degenerate exponents are $\gamma=0$ and $\gamma=p-1$. At these values, the equation and the boundary conditions are invariant under additive translations that preserve positivity and positive rescalings of solutions, respectively, thereby excluding uniqueness.

The proof is carried out entirely within the time-map framework of \cite{BosFelZan2020}. The choice of a stepwise constant weight makes the equation autonomous on the corresponding time intervals and reduces the boundary value problem to two matching time maps. For $\gamma\notin\{-1,0,p-1\}$, a single normalization, valid for every $p>1$, rewrites the ratio of these two maps in terms of integrals over the fixed interval $[0,1]$; strict monotonicity of the quotient yields the uniqueness result without invoking external uniqueness theorems. At $\gamma=-1$, the multiplicative parameter used for the other powers collapses to its limit value and no longer records the ratio between the endpoint values. We therefore introduce a new \emph{additive} parameter which, as explained in Remark \ref{remark:link-rho-upsilon}, describes the logarithmic deviation of the former multiplicative parameter from the limit value. 

The importance of the two-step assumption is illustrated by the numerical example in Section \ref{sec:numerical-multiplicity}. To examine what may happen beyond this class of weights, we consider a piecewise-polynomial weight with the same single change of sign but a non-constant internal profile. For $p=2$ and $\gamma=8$, shooting computations numerically detect at least three positive Neumann solutions. Thus, even when the number and order of the sign intervals are unchanged, modifying the internal geometry of the weight may restore multiplicity. More generally, the sensitivity of indefinite problems to the detailed shape of the weight is visible even at the level of existence in the singular setting \cite{UreCounter2017}.

The paper is organized as follows. Section \ref{sec:setting-main-result} describes the problem, states the complete existence and uniqueness theorem, and recalls the time-map reduction of \cite{BosFelZan2020} in a self-contained form. Section \ref{sec:proof-main-result} introduces the normalized time maps, proves their strict monotonicity, treating the non-logarithmic and logarithmic potential cases separately. Finally, Section \ref{sec:numerical-multiplicity} presents the numerical evidence of multiplicity beyond two-step weights and discusses the role of the internal geometry of the coefficient.

\section{Time-map framework and main result}
\label{sec:setting-main-result}

\subsection{Problem description and notation}
\label{subsec:problem-setting}

We first introduce the framework within which the existence and uniqueness problem will be studied. In particular, we recall the general notation used both in the formulation of the main result and in the time-map reduction developed below. 

Let $T>0$, $p>1$, and $\gamma\in\R$. We consider the scalar $p$-Laplacian equation
\begin{equation}
    \label{eq:main-ODE-phi-laplacian}
    \left(\phi_{p}(u')\right)'
    +a(t)u^{\gamma}=0,
    \qquad t\in(0,T),
\end{equation}
where
\begin{equation}
    \label{eq:def-p-laplacian}
    \phi_{p}(s)\coloneqq\abs{s}^{p-2}s.
\end{equation}
Since we consider only positive solutions, the power nonlinearity $g_{\gamma}(u)\coloneqq u^{\gamma}$ is well defined for every $\gamma\in\R$.

The weight $a(t)$ is assumed to be indefinite and piecewise constant, with exactly one change of sign, as defined in \eqref{eq:intro-two-step-weight}. We assume without loss of generality that the positive part of the weight comes first. Indeed, the symmetric configuration is recovered through the time reversal $t\mapsto T-t$.

We study equation \eqref{eq:main-ODE-phi-laplacian} under either the Neumann boundary conditions
\begin{equation}
    \label{eq:neumann-boundary-conditions}
    \tag{N}
    u'(0)=u'(T)=0
\end{equation}
or the periodic boundary conditions
\begin{equation}
    \label{eq:periodic-boundary-conditions}
    \tag{P}
    u(0)=u(T),
    \qquad
    u'(0)=u'(T).
\end{equation}
By a positive solution of either boundary value problem we mean a function $u\in C^{1}([0,T])$ such that $\phi_{p}(u')\in W^{1,1}(0,T)$, equation \eqref{eq:main-ODE-phi-laplacian} holds almost everywhere in $(0,T)$, the corresponding boundary conditions are satisfied, and
\begin{equation}
    u(t)>0
    \qquad\text{for every }t\in[0,T].
\end{equation}

We also introduce the two constants
\begin{equation}
    \label{eq:def-mu-nu}
    \mu\coloneqq\frac{a_{+}}{a_{+}+a_{-}},
    \qquad
    \nu\coloneqq\frac{a_{-}}{a_{+}}.
\end{equation}
They are related by the identity $\mu(1+\nu)=1$. The mean of the weight over $[0,T]$ is
\begin{equation}
    \label{eq:mean-weight-nu}
        \bar{a}
        \coloneqq\frac{1}{T}\int_{0}^{T}a(t)\dd t
        =
        \frac{a_{+}\tau-a_{-}(T-\tau)}{T}
        =
        a_{+}\left(1-\frac{\tau}{T}\right)
        \left(
            \frac{\tau}{T-\tau}-\nu
        \right).
\end{equation}
Consequently,
\begin{equation}
    \label{eq:sign-mean-weight}
    \sign\bar{a}
    =
    \sign\left(
        \frac{\tau}{T-\tau}-\nu
    \right).
\end{equation}
For later use, we fix the following primitive of $g_{\gamma}$:
\begin{equation}
    \label{eq:def-G-gamma}
    G_{\gamma}(u)\coloneqq
    \begin{cases}
        \displaystyle\frac{u^{\gamma+1}}{\gamma+1},
        & \gamma\neq-1,\\[2mm]
        \log u,
        & \gamma=-1.
    \end{cases}
\end{equation}
For every $\gamma\in\R$, the function $G_{\gamma}$ is strictly increasing on $(0,+\infty)$, and its range and sign are
\begin{equation}
    \label{eq:range-G-gamma}
    G_{\gamma}\left((0,+\infty)\right)
    =
    \begin{cases}
        (0,+\infty), & \gamma>-1,\\
        (-\infty,0), & \gamma<-1,
    \end{cases},\quad G_{-1}\left((0,+\infty)\right)=\R\;\;\text{for }\gamma=-1.
\end{equation}
\noindent Finally, the exponents
\begin{equation}
    \label{eq:degenerate-exponents}
    \gamma=0
    \qquad\text{and}\qquad
    \gamma=p-1
\end{equation}
are structurally degenerate. Indeed, if $\gamma=0$, equation \eqref{eq:main-ODE-phi-laplacian} is invariant under addition of a constant 
\begin{equation}
    u\longmapsto u+C,
\end{equation}
provided that positivity is preserved. Instead, if $\gamma=p-1$, the equation is invariant under the positive rescaling
\begin{equation}
    u\longmapsto\kappa u,
    \quad \kappa>0.
\end{equation}
Both transformations preserve the Neumann and periodic boundary conditions. Hence these invariances rule out uniqueness whenever a positive solution exists and explain why these two exponents are excluded from the main result.

\subsection{Main result}
\label{subsec:main-result}

For $-1<\gamma<p-1$, with $\gamma\neq0$, we define
\begin{align}
    \label{eq:def-K-gamma-p}
    K_{\gamma,p}(\nu)
    &\coloneqq
    \frac{
        \displaystyle
        \int_{0}^{1}
        \frac{1}{
            (1-s)^{\frac{1}{p}}(1+\nu s)^{\frac{\gamma}{\gamma+1}}
        }
        \dd s
    }{
        \displaystyle
        \int_{0}^{1}
        \frac{1}{
            (1-s)^{\frac{1}{p}+\frac{\gamma}{\gamma+1}}
        }
        \dd s
    }=
    \frac{
        \displaystyle
        \int_{0}^{1}
        \left(\frac{1-s}{1+\nu s}\right)^{\frac{\gamma}{\gamma+1}}
        \frac{\dd s}{(1-s)^{\frac{1}{p}+\frac{\gamma}{\gamma+1}}}
    }{
        \displaystyle
        \int_{0}^{1}\frac{1}{(1-s)^{\frac{1}{p}+\frac{\gamma}{\gamma+1}}
        }
        \dd s
    },\\[2mm]
    \label{eq:def-critic-mean-a}
    \bar{a}_{\gamma,p}^{\ast}&\coloneqq 
    a_{+}\frac{\nu(K_{\gamma,p}(\nu)-1)}{1+\nu K_{\gamma,p}(\nu)}=a_{-}\frac{K_{\gamma,p}(\nu)-1}{1+\nu K_{\gamma,p}(\nu)}
\end{align}
The quantity $K_{\gamma,p}(\nu)$ can be read as the weighted mean of the function $\left(\frac{1-s}{1+\nu s}\right)^{\frac{\gamma}{\gamma+1}}$ on $[0,1]$, with weight $(1-s)^{-\frac{1}{p}-\frac{\gamma}{\gamma+1}}$. For the considered values of $\gamma$, both integrals in \eqref{eq:def-K-gamma-p} are finite and strictly positive. Indeed, the factor $1+\nu s$ is bounded away from zero on $[0,1]$ and $1/p<1$. Moreover, since $\gamma>-1$,
\begin{equation*}
    \frac{1}{p}+\frac{\gamma}{\gamma+1}<1
    \quad\text{if and only if}\quad
    \gamma<p-1.
\end{equation*}

We can now state the main result of the paper.

\begin{theorem}
    \label{thm:main-result}
    Let $p>1$ and let $\gamma\in\R\setminus\{0,p-1\}$. Consider problem \eqref{eq:main-ODE-phi-laplacian}, with weight $a(t)$ as defined in \eqref{eq:intro-two-step-weight}, subject to either the Neumann boundary conditions \eqref{eq:neumann-boundary-conditions} or the periodic boundary conditions \eqref{eq:periodic-boundary-conditions}. Then a positive solution exists if and only if one of the corresponding conditions below is satisfied:
    \begin{enumerate}[label=\textup{(\roman*)}]
        \item \label{item:main-super-p-linear} for $p-1<\gamma$: $\bar{a}<0$ or equivalently $\dfrac{\tau}{T-\tau}<\nu$;
        \item \label{item:main-sub-p-linear} for $0<\gamma<p-1$: $\bar{a}_{\gamma,p}^{\ast}<\bar{a}<0$ or equivalently $\nu K_{\gamma,p}(\nu)<\dfrac{\tau}{T-\tau}<\nu$;
        \item \label{item:main-weakly-singular} for $-1<\gamma<0$: $0<\bar{a}<\bar{a}_{\gamma,p}^{\ast}$ or equivalently $\nu<\dfrac{\tau}{T-\tau}<\nu K_{\gamma,p}(\nu)$;
        \item \label{item:main-singular} for $\gamma\leq -1$: $\bar{a}>0$ or equivalently $\dfrac{\tau}{T-\tau}>\nu$.
    \end{enumerate}
    \noindent Whenever a positive solution exists, it is unique.
\end{theorem}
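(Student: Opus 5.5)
The plan is to follow the time-map reduction of \cite{BosFelZan2020} recalled in Section~\ref{sec:setting-main-result}. The first step is a qualitative description of positive solutions. Since $g_\gamma>0$, on $[0,\tau)$ we have $(\phi_p(u'))'=-a_+u^\gamma<0$, and on $[\tau,T]$ we have $(\phi_p(u'))'=a_-u^\gamma>0$. Hence a Neumann solution is strictly decreasing on $(0,T)$, with $u(0)=x>u(\tau)=y>u(T)=z>0$.

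For the periodic problem I would use the reduction recalled in that section: integrating the equation and using time reversal, the derivative of a periodic positive solution must vanish at the switching points $t=0$ and $t=\tau$ up to relabeling. This yields a Neumann solution on $[0,T]$, possibly after the reflection $t\mapsto T-t$ noted in Section~\ref{sec:setting-main-result}. So I would treat only (N) and transfer the conclusions.

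The second step uses the energy $\frac{p-1}{p}\abs{u'}^p+a(t)G_\gamma(u)$, which is conserved on each step. The $C^1$ matching at $\tau$ gives
\begin{equation*}
    a_+\bigl(G_\gamma(x)-G_\gamma(y)\bigr)=a_-\bigl(G_\gamma(y)-G_\gamma(z)\bigr),
\end{equation*}
and the two time-map equations
\begin{equation*}
    \tau=\int_y^x\frac{(p')^{-1/p}\,\dd u}{\bigl(a_+(G_\gamma(x)-G_\gamma(u))\bigr)^{1/p}},
    \qquad
    T-\tau=\int_z^y\frac{(p')^{-1/p}\,\dd u}{\bigl(a_-(G_\gamma(u)-G_\gamma(z))\bigr)^{1/p}}.
\end{equation*}
Conversely, every triple $(x,y,z)$ satisfying these three relations yields a unique positive Neumann solution. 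Thus counting solutions amounts to counting triples.

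For $\gamma\notin\{-1,0,p-1\}$ I would exploit homogeneity. The map $u\mapsto\lambda u$ rescales both time maps by the same factor $\lambda^{(p-1-\gamma)/p}$. Since $p-1-\gamma\ne0$, this factor sweeps $(0,+\infty)$, so each admissible shape corresponds to exactly one amplitude fitting the prescribed $\tau$. The problem therefore reduces to the single scalar equation
\begin{equation*}
    Q(\rho)=\frac{T-\tau}{\tau},
\end{equation*}
where $Q$ is the quotient of the two time maps and $\rho\in(0,1)$ is a multiplicative shape parameter, for instance $\rho=G_\gamma(y)/G_\gamma(x)$, with $z$ then determined by the energy matching. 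After the substitution $G_\gamma(u)=G_\gamma(y)+s\bigl(G_\gamma(x)-G_\gamma(y)\bigr)$, and the analogous one on the negative step, both integrals become integrals over $[0,1]$ of the type appearing in \eqref{eq:def-K-gamma-p}, with $\nu$ entering through the factor $1+\nu s$. For $\gamma=-1$ the multiplicative parameter degenerates, so I would instead use the additive parameter $\log x-\log y$ (with $\log y-\log z=(\log x-\log y)/\nu$); here $G_{-1}=\log$ makes translation in $\log u$ the natural symmetry.

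The third step is to prove that $Q$ is strictly monotone in the shape parameter and to compute its limits at the endpoints. Uniqueness then follows from injectivity, and existence holds exactly when $(T-\tau)/\tau$ lies strictly between the two limits. I expect one limit, corresponding to the amplitude ratio tending to $1$ (linearization), to give $1/\nu$, i.e.\ $\bar a=0$. The other limit, where $y/x\to0$ or $\to\infty$ according to the sign of $\gamma+1$, should give $1/(\nu K_{\gamma,p}(\nu))$ when $-1<\gamma<p-1$, and $0$ or $+\infty$ when $\gamma\le-1$ or $\gamma>p-1$. In the latter case the relevant integral in \eqref{eq:def-K-gamma-p} diverges, which matches the dichotomy of cases \ref{item:main-super-p-linear}--\ref{item:main-singular}. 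The sign conventions of \eqref{eq:sign-mean-weight} then convert these interval conditions into the stated conditions on $\bar a$ and $\bar a^{\ast}_{\gamma,p}$.

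The main obstacle is the strict monotonicity of $Q$ on the full parameter range; this is precisely what was left open numerically in \cite{BosFelZan2020}. My first attempt would be to write $\log Q=\log N(\rho)-\log D(\rho)$ with $N,D$ integrals over $[0,1]$ having a power-type dependence on $\rho$. I would differentiate under the integral sign and show that $\rho N'/N$ and $\rho D'/D$ are weighted averages of functions that are ordered pointwise. This reduces to a Chebyshev-type (monotone rearrangement) inequality, using that the kernel $(1-s)/(1+\nu s)$ is decreasing in $s$. The case $\gamma=-1$ would then be handled by the same argument applied to the exponential kernels arising from the additive parameter, or alternatively as a limit of the $\gamma\ne-1$ identities, provided strictness survives the limit.
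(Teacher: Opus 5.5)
Your architecture is the paper's: time maps, elimination of the amplitude by the factor $\lambda^{(p-1-\gamma)/p}$, a scalar quotient equation, monotonicity plus endpoint limits, and an additive parameter at $\gamma=-1$. But the decisive step, strict monotonicity of the quotient, is not proved. You leave it as a ``first attempt'' and name a Chebyshev/rearrangement inequality driven by the decrease of $(1-s)/(1+\nu s)$.

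\textbf{Main gap: the monotonicity step.} The Chebyshev/rearrangement inequality is not the mechanism, and as stated it would not suffice. The two logarithmic derivatives are averages against \emph{different} weights, and pointwise ordering of the averaged functions does not order such averages.

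The missing idea is to factor out the junction value $G_\gamma(y)=G_\gamma(x_*)$, which is common to both time maps and cancels in their quotient. With your substitution, write $G_\gamma(u)=G_\gamma(y)(1+\lambda s)$ on the positive step and $G_\gamma(u)=G_\gamma(y)(1-\frac{\lambda}{\nu}s)$ on the negative one, where $\lambda=G_\gamma(x)/G_\gamma(y)-1$. Then the first time map over the second equals $\nu\,\mathcal{I}_{0,\gamma,p}(\lambda)/\mathcal{I}_{1,\gamma,p}(\lambda)$. The integrands $(1-s)^{-1/p}(1+\lambda s)^{-\gamma/(\gamma+1)}$ and $(1-s)^{-1/p}(1-\frac{\lambda}{\nu}s)^{-\gamma/(\gamma+1)}$ are pointwise monotone in $\lambda$ in \emph{opposite} directions. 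Hence $\frac{\dd}{\dd\lambda}\log\mathcal{I}_{0,\gamma,p}$ and $-\frac{\dd}{\dd\lambda}\log\mathcal{I}_{1,\gamma,p}$ both have sign $-\sign\frac{\gamma}{\gamma+1}$ and simply add; no comparison of averages is needed.

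If you normalize by $G_\gamma(x)$ instead, the functions averaged in $\rho N'/N$ and $\rho D'/D$ are, up to the common factor $-\frac{\gamma}{\gamma+1}$,
\begin{equation*}
\frac{\rho(1-s)}{\rho+(1-\rho)s}<1<\frac{\rho(1+s/\nu)}{\rho-(1-\rho)s/\nu}\quad\text{on }(0,1).
\end{equation*}
They are separated by a constant, and that separation, not a rearrangement inequality, is what makes the weights irrelevant.

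\textbf{The case $\gamma=-1$.} The analogous direct computation gives $\frac{\dd}{\dd\upsilon}\log\mathcal{Q}_{-1,p}=1-(1-\mu)I_0-\mu I_1>0$. Your alternative of passing to the limit $\gamma\to-1$ would not preserve strictness. The limit $+\infty$ as $\upsilon\to+\infty$ also needs its own estimate: $K_{\gamma,p}$ is meaningless at $\gamma=-1$, and the divergence comes from $e^{\upsilon}$ beating $e^{\mu\upsilon}$, not from a non-integrable singularity.

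\textbf{Two further errors.} Both are wrong as stated, although the final conclusions survive.
\begin{enumerate}
\item A positive periodic solution does \emph{not} have $u'=0$ at the switching points. $\phi_p(u')$ is strictly monotone on each step, and reversibility of the autonomous systems (oddness of $h$) puts the maximum at $\tau/2$ and the minimum at $(T+\tau)/2$. The restriction to $[\tau/2,(T+\tau)/2]$ therefore solves a Neumann problem of length $T/2$, with steps $\tau/2$ and $(T-\tau)/2$. The conditions transfer only because they depend on $\tau/(T-\tau)$ and $\nu$ alone.
\item For $\gamma>-1$, the shape parameter $\rho=G_\gamma(y)/G_\gamma(x)$ ranges over $(\mu,1)$, not $(0,1)$, because $z>0$ forces $G_\gamma(z)>0$. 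Its nontrivial endpoint is $z\to0^+$, that is $\rho\to\mu^+$ and $\lambda\to\nu^-$, with $y/x\to\mu^{1/(\gamma+1)}$ rather than $y/x\to0$. This endpoint is exactly where the factor $1+\nu s$ and the exponent $\frac1p+\frac{\gamma}{\gamma+1}$ in $K_{\gamma,p}$ come from. For $\gamma<-1$, the range is $(1,+\infty)$ and $y/x\to0$ at the far end; $y/x\to\infty$ cannot occur, since $y<x$.
\end{enumerate}
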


\begin{remark}
    \label{remark:main-result-mean-weight}
    Inequalities \ref{item:main-super-p-linear} and \ref{item:main-singular} are related to a general necessary condition for the existence of positive solutions. More precisely, let $u$ be a positive solution of \eqref{eq:main-ODE-phi-laplacian} satisfying either \eqref{eq:neumann-boundary-conditions} or \eqref{eq:periodic-boundary-conditions}. Dividing the equation by $u^{\gamma}$ and integrating by parts, as in \cite[Remark~2.1]{BosFelZan2020}, gives
    \begin{equation}
        \begin{split}
            \bar{a}&=
            -\frac{1}{T}\int_{0}^{T}
            \frac{
                \left(\phi_{p}(u'(t))\right)'
            }{
                u^{\gamma}(t)
            }
            \dd t\\
            &=-\frac{1}{T}\left[
                \frac{
                    \phi_{p}(u'(t))
                }{
                    u^{\gamma}(t)
                }
            \right]_{0}^{T}
            -\frac{\gamma}{T}
            \int_{0}^{T}
            \phi_{p}(u'(t))
            u'(t)
            u^{-\gamma-1}(t)
            \dd t.
        \end{split}
    \end{equation}
    The boundary term vanishes under both sets of boundary conditions. Since
    \begin{equation}
        \phi_{p}(u'(t))u'(t)
        =
        \abs{u'(t)}^{p},
    \end{equation}
    we obtain
    \begin{equation}
        \label{eq:necessary-mean-identity}
        \bar{a}
        =
        -\frac{\gamma}{T}
        \int_{0}^{T}
        \abs{u'(t)}^{p}
        u^{-\gamma-1}(t)
        \dd t.
    \end{equation}
    By \eqref{eq:main-ODE-phi-laplacian}, a positive solution cannot be constant. Therefore the integral in \eqref{eq:necessary-mean-identity} is strictly positive and, for $\gamma\neq0$, 
    \begin{equation}
        \sign\bar{a}=-\sign\gamma.
    \end{equation}
    Hence the necessary sign condition is
    \begin{equation}
        \label{eq:necessary-sign-condition}
        \gamma\bar{a}<0.
    \end{equation}

    \noindent In this sense, Theorem~\ref{thm:main-result} shows that \eqref{eq:necessary-sign-condition} is also sufficient when
    \begin{equation}
        \gamma\leq-1
        \qquad\text{or}\qquad
        \gamma>p-1.
    \end{equation}
    In the intermediate range $\gamma\in(-1,p-1)\setminus\{0\}$, the sign condition remains necessary, but an additional bound involving the threshold $\bar{a}_{\gamma,p}^{\ast}$ is required.
\end{remark}

\begin{remark}
    \label{remark:comparison-BFZ}
    The main uniqueness result in \cite[Theorem~1.2]{BosFelZan2020} applies to the exponent ranges
    \begin{equation}
        \gamma\leq\frac{1-2p}{p-1}
        \qquad\text{or}\qquad
        \gamma>p-1.
    \end{equation}
    In those ranges, Boscaggin, Feltrin and Zanolin also proved that the necessary condition $\gamma\bar{a}<0$ is sufficient for the existence of a positive solution.

    For the standard Laplacian, corresponding to $p=2$, some additional cases were discussed in \cite[Section~4.1]{BosFelZan2020}. For $0<\gamma<1$, existence and uniqueness were completely characterized in \cite[Proposition~4.1]{BosFelZan2020}, with uniqueness obtained from a general result for concave nonlinearities. For $-1<\gamma<0$, \cite[Proposition~4.2]{BosFelZan2020} showed that, in our notation, 
    \begin{equation}
        \nu<
        \frac{\tau}{T-\tau}
        <\nu K_{\gamma,2}(\nu)
    \end{equation}
    is a sufficient condition for existence. Its necessity and the uniqueness of the solution remained open. Moreover, for $-3<\gamma<-1$, \cite[Proposition~4.3]{BosFelZan2020} established that $\bar{a}>0$ is necessary and sufficient for existence, while uniqueness remained open. In these last two ranges, the authors presented numerical evidence suggesting the strict monotonicity of the corresponding time-map quotient.

    Theorem~\ref{thm:main-result} completes this picture, giving an existence and uniqueness criterion for every $p>1$ and $\gamma\in\R\setminus\{0,p-1\}$. The previous restriction $\gamma\leq(1-2p)/(p-1)$ is relaxed to $\gamma\leq-1$, including the logarithmic potential case. In the intermediate range $\gamma\in(-1,p-1)\setminus\{0\}$, the theorem provides sharp existence conditions and uniqueness for arbitrary $p>1$. In particular, for $p=2$, it proves \cite[Conjecture~4.1]{BosFelZan2020} and, in case~\ref{item:main-weakly-singular}, strengthens \cite[Proposition~4.2]{BosFelZan2020} by establishing both the necessity of the stated existence condition and uniqueness. All cases are treated within the time-map framework, without invoking uniqueness results for concave nonlinearities.
\end{remark}

\subsection{Planar formulation and time-map characterization}
\label{subsec:planar-time-map}

In this subsection, we recall the part of the time-map framework developed by Boscaggin, Feltrin and Zanolin in \cite{BosFelZan2020} that is needed in the proof of Theorem~\ref{thm:main-result}. We first present the construction for a general $\phi$-Laplacian and a positive continuous nonlinearity and then specialize it to the $p$-Laplacian and the power nonlinearity considered in \eqref{eq:main-ODE-phi-laplacian}. This provides a self-contained reduction of the boundary value problem to a system of time-map equations. The further normalization used in the proof of the main result will be introduced later in Section~\ref{sec:proof-main-result}.

As mentioned in Section \ref{sec:introduction}, we consider the more general equation
\begin{equation}
    \label{eq:abstract-phi-laplacian}
    \left(\phi(u')\right)'
    +a(t)g(u)=0,
\end{equation}
where $a$ is the weight defined in \eqref{eq:intro-two-step-weight}, $\phi\colon\Omega\to\R$ is a homeomorphism from an open interval $\Omega\subseteq\R$, with $0\in\Omega$, onto $\R$, satisfying
\begin{equation}
    \label{eq:assumptions-phi}
    \phi(0)=0,
    \qquad
    \phi(s)s>0
    \quad\text{for every }s\in\Omega\setminus\{0\},
\end{equation}
and $g\colon(0,+\infty)\to(0,+\infty)$ is continuous. Equation \eqref{eq:main-ODE-phi-laplacian} is recovered by taking
\begin{equation}
    \phi=\phi_{p},
    \qquad
    g=g_{\gamma}\,\text{ with }\,
    g_{\gamma}(u)=u^{\gamma}.
\end{equation}

Set $h\coloneqq\phi^{-1}$. Introducing the phase variables
\begin{equation}
    x=u,
    \qquad
    y=\phi(u'),
\end{equation}
equation \eqref{eq:abstract-phi-laplacian} can be written as the planar system
\begin{equation}
    \label{eq:abstract-planar-system}
    \begin{cases}
        x'=h(y),\\
        y'=-a(t)g(x).
    \end{cases}
\end{equation}
Since the weight is constant on each side of $t=\tau$, system \eqref{eq:abstract-planar-system} reduces to the two autonomous systems
\begin{equation}
    \label{eq:two-autonomous-systems}
        (\mathcal{S}_{+})\colon
        \begin{cases}
            x'=h(y),\\
            y'=-a_{+}g(x),
        \end{cases}
        t\in[0,\tau),\qquad
        (\mathcal{S}_{-})\colon
        \begin{cases}
            x'=h(y),\\
            y'=a_{-}g(x),
        \end{cases}
        t\in[\tau,T].
\end{equation}
Let $u$ be a positive solution of \eqref{eq:abstract-phi-laplacian} satisfying the Neumann conditions \eqref{eq:neumann-boundary-conditions}, and let $(x,y)=(u,\phi(u'))$ be the corresponding solution of \eqref{eq:abstract-planar-system}. Since
\begin{equation}
    y(0)=y(T)=0,
\end{equation}
integration of the second equation of $(\mathcal{S}_{+})$ gives
\begin{equation}
    y(t)
    =
    -a_{+}\int_{0}^{t}g(x(s))\dd s
    <0,
    \qquad t\in(0,\tau].
\end{equation}
In the same way, integration backward from $T$ gives $y(t)<0$ for every $t\in[\tau,T)$ and, consequently,
\begin{equation}
    \label{eq:y-negative-whole-interval}
    y(t)<0
    \qquad\text{for every }t\in(0,T).
\end{equation}
Since $h$ preserves the sign, it follows that
\begin{equation}
    x'(t)=h(y(t))<0
    \qquad\text{for every }t\in(0,T).
\end{equation}
Thus every positive Neumann solution is strictly decreasing. Setting
\begin{equation}
    \label{eq:def-alpha-xstar-beta}
    \alpha\coloneqq x(0),
    \qquad
    x_{*}\coloneqq x(\tau),
    \qquad
    \beta\coloneqq x(T),
\end{equation}
we obtain
\begin{equation}
    \label{eq:order-alpha-xstar-beta}
    0<\beta<x_{*}<\alpha.
\end{equation}

Now define
\begin{equation}
    \label{eq:def-H-abstract}
    H(y)\coloneqq\int_{0}^{y}h(s)\dd s
\end{equation}
and let $G$ be a primitive of $g$. Since $G'=g>0$, the function $G$ is strictly increasing on $(0,+\infty)$.

Along the first autonomous system $(\mathcal{S}_{+})$, the corresponding energy is preserved:
\begin{equation}
    \label{eq:first-energy-identity}
    H(y)+a_{+}G(x)=a_{+}G(\alpha),
    \qquad t\in[0,\tau].
\end{equation}
In an analogous way, for the second autonomous system $(\mathcal{S}_{-})$ we get
\begin{equation}
    \label{eq:second-energy-identity}
    H(y)-a_{-}G(x)=-a_{-}G(\beta),
    \qquad t\in[\tau,T].
\end{equation}
Evaluating both identities at $t=\tau$ and solving the system with respect to $G(x_{\ast})$ gives the matching relation
\begin{equation}
    \label{eq:matching-G-xstar}
        G(x_{*})
        =
        \frac{
            a_{+}G(\alpha)+a_{-}G(\beta)
        }{
            a_{+}+a_{-}
        }=
        \mu G(\alpha)+(1-\mu)G(\beta).
\end{equation}

\noindent Let
\begin{equation}
    H_{l}\coloneqq\left.H\right|_{(-\infty,0]}.
\end{equation}
Since $h(y)<0$ for $y<0$, the function $H_{l}$ is strictly decreasing and therefore invertible onto its range. Following \cite{BosFelZan2020}, we introduce
\begin{equation}
    \label{eq:def-Lh-Lg}
    \mathcal{L}_{h}\coloneqq h\circ H_{l}^{-1},
    \qquad
    \mathcal{L}_{g}\coloneqq g\circ G^{-1}.
\end{equation}
The function $\mathcal{L}_{h}$ vanishes at the origin and is strictly negative elsewhere on its domain, whereas $\mathcal{L}_{g}>0$ on its domain.

Combining \eqref{eq:first-energy-identity}, \eqref{eq:second-energy-identity}, and \eqref{eq:y-negative-whole-interval}, we obtain
\begin{equation}
    x'
    =
    \mathcal{L}_{h}\left(
        a_{+}G(\alpha)-a_{+}G(x)
    \right),
    \qquad t\in[0,\tau],
\end{equation}
and
\begin{equation}
    x'
    =
    \mathcal{L}_{h}\left(
        a_{-}G(x)-a_{-}G(\beta)
    \right),
    \qquad t\in[\tau,T].
\end{equation}
Integrating along the two decreasing branches gives
\begin{equation}
    \label{eq:time-integrals-first}
    \int_{x_{*}}^{\alpha}
    \frac{
        \dd x
    }{
        -\mathcal{L}_{h}\left(
            a_{+}G(\alpha)-a_{+}G(x)
        \right)
    }
    =
    \tau
\end{equation}
and
\begin{equation}
    \label{eq:time-integrals-second}
    \int_{\beta}^{x_{*}}
    \frac{
        \dd x
    }{
        -\mathcal{L}_{h}\left(
            a_{-}G(x)-a_{-}G(\beta)
        \right)
    }
    =
    T-\tau.
\end{equation}

We now introduce the variables
\begin{equation}
    \label{eq:def-omega-sigma}
    \omega\coloneqq G(\alpha),
    \qquad
    \sigma\coloneqq G(\beta).
\end{equation}
Since $G$ is strictly increasing and $\alpha>\beta$, one has
\begin{equation}
    \omega>\sigma.
\end{equation}
The admissible set for these variables is therefore
\begin{equation}
    \label{eq:def-domain-O}
    \mathcal{O}
    \coloneqq
    \left\{
        (\omega,\sigma)
        \in
        G\left((0,+\infty)\right)
        \times
        G\left((0,+\infty)\right)
        \colon\,
        \omega>\sigma
    \right\}.
\end{equation}
By \eqref{eq:matching-G-xstar},
\begin{equation}
    G(x_{*})
    =
    \mu\omega+(1-\mu)\sigma.
\end{equation}
Performing the change of variable
\begin{equation}
    \theta=G(x),
    \qquad
    \dd x
    =
    \frac{\dd\theta}{\mathcal{L}_{g}(\theta)},
\end{equation}
in \eqref{eq:time-integrals-first} and
\eqref{eq:time-integrals-second}, we define
\begin{equation}
    \label{eq:time-system-M}
    \begin{split}
        \M_{\textup{I}}(\omega,\sigma)
        &\coloneqq
        \int_{\mu\omega+(1-\mu)\sigma}^{\omega}
        \frac{
            \dd\theta
        }{
            -\mathcal{L}_{h}\left(
                a_{+}\omega-a_{+}\theta
            \right)
            \mathcal{L}_{g}(\theta)
        },\\[2mm]
        \M_{\textup{II}}(\omega,\sigma)
        &\coloneqq
        \int_{\sigma}^{\mu\omega+(1-\mu)\sigma}
        \frac{
            \dd\theta
        }{
            -\mathcal{L}_{h}\left(
                a_{-}\theta-a_{-}\sigma
            \right)
            \mathcal{L}_{g}(\theta)
        }.
    \end{split}
\end{equation}

This construction yields the following time-map characterization that will be used throughout the paper.

\begin{prop}[Time-map characterization, {\cite[Theorem~2.1]{BosFelZan2020}}]
    \label{prop:time-map-characterization-M}
    Positive solutions of the Neumann problem associated with \eqref{eq:abstract-phi-laplacian} are in one-to-one correspondence with the pairs $(\omega,\sigma)\in\mathcal{O}$ satisfying
    \begin{equation}
        \label{eq:time-map-system-M}
        \begin{cases}
            \M_{\textup{I}}(\omega,\sigma)=\tau,\\
            \M_{\textup{II}}(\omega,\sigma)=T-\tau.
        \end{cases}
    \end{equation}
    Consequently, the Neumann problem admits exactly one positive solution if and only if system \eqref{eq:time-map-system-M} admits exactly one solution in
    $\mathcal{O}$.
\end{prop}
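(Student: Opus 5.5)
The plan is to prove both directions of the correspondence by reversing the construction carried out just before the statement. The forward direction is essentially done already. Let $u$ be a positive Neumann solution and $(x,y)=(u,\phi(u'))$. By \eqref{eq:y-negative-whole-interval}, $x$ is strictly decreasing, so $\alpha>x_*>\beta$. Setting $\omega=G(\alpha)$ and $\sigma=G(\beta)$ gives $(\omega,\sigma)\in\mathcal{O}$. The energy identities \eqref{eq:first-energy-identity}–\eqref{eq:second-energy-identity} yield the matching relation \eqref{eq:matching-G-xstar}. Integrating $x'=\mathcal{L}_h(\cdot)$ along each decreasing branch gives \eqref{eq:time-integrals-first}–\eqref{eq:time-integrals-second}, and the substitution $\theta=G(x)$ then turns these into $\M_{\textup{I}}=\tau$ and $\M_{\textup{II}}=T-\tau$.

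One point must be checked here: the integrals in \eqref{eq:time-system-M} are convergent. The integrand of $\M_{\textup{I}}$ is singular at $\theta=\omega$, since $\mathcal{L}_h(0)=0$, and that of $\M_{\textup{II}}$ at $\theta=\sigma$. Convergence follows because they equal the finite times $\tau$ and $T-\tau$. The integrand of $\M_{\textup{I}}$ is regular at its lower limit: $\theta=G(x_*)<\omega$ there, so $\mathcal{L}_h(a_+\omega-a_+\theta)\neq0$. A similar remark applies to $\M_{\textup{II}}$.

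For the converse, I will fix $(\omega,\sigma)\in\mathcal{O}$ solving \eqref{eq:time-map-system-M} and construct $u$, taking $\alpha=G^{-1}(\omega)$, $\beta=G^{-1}(\sigma)$ and $x_*=G^{-1}(\mu\omega+(1-\mu)\sigma)$, so that $\beta<x_*<\alpha$. On $[0,\tau]$, I define $x$ implicitly through
\begin{equation*}
t=\int_{x}^{\alpha}\frac{\dd\xi}{-\mathcal{L}_h\left(a_+G(\alpha)-a_+G(\xi)\right)} .
\end{equation*}
The right-hand side is a strictly decreasing continuous bijection from $[x_*,\alpha]$ onto $[0,\tau]$; this uses $\M_{\textup{I}}=\tau$ after undoing the substitution. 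Its inverse is therefore $C^1$ on $(0,\tau]$ and satisfies $x'=\mathcal{L}_h(a_+(\omega-G(x)))$. Next I set $y=H_l^{-1}(a_+(\omega-G(x)))\le0$. Then $h(y)=x'$, and differentiating $H(y)=a_+(\omega-G(x))$ gives $h(y)y'=-a_+g(x)h(y)$, hence $y'=-a_+g(x)$ wherever $h(y)\neq0$, i.e. for $t>0$. Continuity extends this to $t=0$, where $y(0)=0$. The interval $[\tau,T]$ is handled symmetrically, integrating backward from $T$ with $x(T)=\beta$ and using $\M_{\textup{II}}=T-\tau$.

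At $t=\tau$ both branches give $x=x_*$. The matching relation \eqref{eq:matching-G-xstar} then shows that $H(y(\tau^-))=a_+(\omega-G(x_*))=a_-(G(x_*)-\sigma)=H(y(\tau^+))$. Since $H_l$ is injective and both values are nonpositive, $y$ is continuous at $\tau$, so $x\in C^1$ and $\phi(x')=y\in W^{1,1}$. Thus $u=x$ is a positive Neumann solution, because $x(T)=\beta>0$.

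Finally, the two maps are mutually inverse. Starting from a solution $u$, the construction recovers the same $u$ by uniqueness of the implicit inversion. Conversely, $(\omega,\sigma)$ is recovered as $(G(u(0)),G(u(T)))$. This gives the bijection, and the counting statement in the proposition follows immediately. The main obstacle I anticipate is the regularity at the turning points $t=0,T$, where $\mathcal{L}_h$ vanishes: one must justify that $x$ is differentiable there with $x'=0$ and that $y'$ exists. I will handle this by working directly with $y$, noting that $y(t)=-a_+\int_0^t g(x)$ can be verified from the integral equation, rather than differentiating through $\mathcal{L}_h$.
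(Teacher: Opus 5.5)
Your proposal is correct and follows the paper's route. The forward direction is exactly the derivation the paper carries out before the statement. Your converse (inverting the time integrals on $[x_*,\alpha]$ and $[\beta,x_*]$, then gluing at $t=\tau$ via \eqref{eq:matching-G-xstar} and the injectivity of $H_l$) is the standard completion that the paper leaves to \cite[Theorem~2.1]{BosFelZan2020}. The turning-point issue you anticipate is harmless: $x$ is continuous on $[0,\tau]$ and $x'=h(y)\to h(0)=0$ as $t\to0^{+}$, so $x\in C^{1}$ there by the mean value theorem.
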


Suppose now that $G$ has constant sign on $(0,+\infty)$. Then $\omega\neq0$, and we can introduce the parameter
\begin{equation}
    \label{eq:def-rho-BFZ}
    \rho
    \coloneqq
    \frac{
        a_{-}G(\beta)
    }{
        a_{+}G(\alpha)
    }
    =
    \nu\frac{\sigma}{\omega}.
\end{equation}
It follows that $\sigma=\frac{\rho}{\nu}\omega$ and
\begin{equation}
        G(x_{*})=
        \mu\omega+(1-\mu)\sigma=
        \mu(1+\rho)\omega.
\end{equation}

After the change of variable $\theta=\omega\xi$, the two time maps in \eqref{eq:time-system-M} can be written as
\begin{equation}
    \label{eq:time-system-F}
    \begin{split}
        \F_{\textup{I}}(\omega,\rho)
        &\coloneqq
        \omega
        \int_{\mu(1+\rho)}^{1}
        \frac{
            \dd\xi
        }{
            -\mathcal{L}_{h}\left(
                a_{+}\omega(1-\xi)
            \right)
            \mathcal{L}_{g}(\omega\xi)
        },\\[2mm]
        \F_{\textup{II}}(\omega,\rho)
        &\coloneqq
        \omega
        \int_{\frac{\rho}{\nu}}^{\mu(1+\rho)}
        \frac{
            \dd\xi
        }{
            -\mathcal{L}_{h}\left(
                \omega\left(
                    a_{-}\xi-a_{+}\rho
                \right)
            \right)
            \mathcal{L}_{g}(\omega\xi)
        }.
    \end{split}
\end{equation}
Accordingly, system \eqref{eq:time-map-system-M} is equivalent to
\begin{equation}
    \label{eq:time-map-system-F}
    \begin{cases}
        \F_{\textup{I}}(\omega,\rho)=\tau,\\
        \F_{\textup{II}}(\omega,\rho)=T-\tau.
    \end{cases}
\end{equation}
When $\mathcal{L}_{h}$ and $\mathcal{L}_{g}$ are homogeneous, this parametrization separates the variable $\omega$ from the \emph{ratio} parameter $\rho$ and it is remarkably useful, as the second equation of \eqref{eq:time-map-system-F} can be replaced by 
\begin{equation}
    \frac{\F_{\textup{I}}(\omega,\rho)}{\F_{\textup{II}}(\omega,\rho)}=\frac{\tau}{T-\tau}
\end{equation}
and the ratio on the left-hand side no longer depends on $\omega$.

We now return to equation \eqref{eq:main-ODE-phi-laplacian}.
For the $p$-Laplacian, we have
\begin{equation}
    \label{eq:p-laplacian-H-functions}
    \begin{aligned}
        h(s)
        &=\phi^{-1}_{p}(s)=
        \abs{s}^{\frac{p}{p-1}-2}s,\quad h(0)=0\\
        H(y)&=
        \frac{p-1}{p}
        \abs{y}^{\frac{p}{p-1}},\\
        H_{l}^{-1}(\xi)&=
        -\left(
            \frac{p}{p-1}
        \right)^{\frac{p-1}{p}}
        \xi^{\frac{p-1}{p}},
        \qquad \xi\geq0.
    \end{aligned}
\end{equation}
Consequently,
\begin{equation}
    \label{eq:Lh-p-laplacian}
    \mathcal{L}_{h}(\xi)
    =
    -\left(
        \frac{p}{p-1}
    \right)^{\frac{1}{p}}
    \xi^{\frac{1}{p}},
    \qquad \xi\geq0.
\end{equation}
Assume first that $\gamma\neq-1$ and choose $G=G_{\gamma}$ as in \eqref{eq:def-G-gamma}. By \eqref{eq:range-G-gamma}, $G_{\gamma}$ has constant sign, and
\begin{equation}
    G_{\gamma}^{-1}(\xi)
    =
    \abs{\gamma+1}^{\frac{1}{\gamma+1}}
    \abs{\xi}^{\frac{1}{\gamma+1}},
    \qquad
    \xi\in G_{\gamma}\left((0,+\infty)\right).
\end{equation}
It follows that
\begin{equation}
    \label{eq:Lg-power-nonlinearity}
    \mathcal{L}_{g}(\xi)
    =
    \abs{\gamma+1}^{\frac{\gamma}{\gamma+1}}
    \abs{\xi}^{\frac{\gamma}{\gamma+1}}.
\end{equation}

The admissible set for the pair $(\omega,\rho)$ is
\begin{equation}
    \label{eq:def-domain-F}
    \mathcal{D}_{\gamma}
    \coloneqq
    \begin{cases}
        (0,+\infty)\times(0,\nu),
        & \gamma>-1,\\[1mm]
        (-\infty,0)\times(\nu,+\infty),
        & \gamma<-1.
    \end{cases}
\end{equation}
Indeed, if $\gamma>-1$, then
\begin{equation}
\label{eq:order-omega-sigma-gamma_>-1}
    0<\sigma<\omega,
\end{equation}
and hence $0<\rho<\nu$. If $\gamma<-1$, then
\begin{equation}
\label{eq:order-omega-sigma-gamma_<-1}
    \sigma<\omega<0,
\end{equation}
and consequently $\rho>\nu$.

Substituting \eqref{eq:Lh-p-laplacian} and
\eqref{eq:Lg-power-nonlinearity} into
\eqref{eq:time-system-F}, we obtain
\begin{equation}
    \label{eq:functions-FI-FII-special-case}
    \begin{split}
        \F_{\textup{I}}(\omega,\rho)
        &=
        \left(
            \frac{p-1}{p}
        \right)^{\frac{1}{p}}
        \frac{
            \abs{\gamma+1}^{-\frac{\gamma}{\gamma+1}}
            \omega
            \abs{\omega}^{
                -\frac{\gamma}{\gamma+1}-\frac{1}{p}
            }
        }{
            a_{+}^{\frac{1}{p}}
        }
        \int_{\mu(1+\rho)}^{1}
        \frac{
            1
        }{
            \abs{1-\xi}^{\frac{1}{p}}
            \abs{\xi}^{\frac{\gamma}{\gamma+1}}
        }\dd\xi,\\[3mm]
        \F_{\textup{II}}(\omega,\rho)
        &=
        \left(
            \frac{p-1}{p}
        \right)^{\frac{1}{p}}
        \frac{
            \abs{\gamma+1}^{-\frac{\gamma}{\gamma+1}}
            \omega
            \abs{\omega}^{
                -\frac{\gamma}{\gamma+1}-\frac{1}{p}
            }
        }{
            a_{+}^{\frac{1}{p}}
        }
        \int_{\frac{\rho}{\nu}}^{\mu(1+\rho)}
        \frac{
            1
        }{
            \abs{\nu\xi-\rho}^{\frac{1}{p}}
            \abs{\xi}^{\frac{\gamma}{\gamma+1}}
        }\dd\xi.
    \end{split}
\end{equation}
Although the orientation of the integrals in \eqref{eq:functions-FI-FII-special-case} depends on the sign of $\gamma+1$, both time maps are strictly positive on $\mathcal{D}_{\gamma}$.

\begin{remark}[Periodic problem]
    \label{remark:periodic-problem}
    The preceding characterization was derived for the Neumann problem. As shown in \cite[Section~2.2]{BosFelZan2020}, the periodic problem can be reduced to the same setting when $h$ is odd. This condition is automatically satisfied by the $p$-Laplacian.

    Extend the weight $a(t)$ to the whole real line by $T$-periodicity. Let $(x,y)$ be a positive $T$-periodic solution of \eqref{eq:abstract-planar-system}. On $(0,\tau)$, the function $y$ is strictly decreasing, whereas on $(\tau,T)$ it is strictly increasing. Since $h$ is strictly increasing, the same monotonicity properties hold for $x'=h(y)$. Periodicity then implies that $x$ has exactly one maximum in $(0,\tau)$ and exactly one minimum in $(\tau,T)$.

    The symmetry of the autonomous trajectories, which follows from the fact that $h$ is odd, places these critical points at
    \begin{equation}
        t_{\max}=\frac{\tau}{2},
        \qquad
        t_{\min}=\frac{T+\tau}{2}.
    \end{equation}
    Therefore, the restriction of $x$ to
    \begin{equation}
        \left[
            \frac{\tau}{2},
            \frac{T+\tau}{2}
        \right]
    \end{equation}
    is a positive solution satisfying Neumann conditions at both  endpoints.

    Conversely, let $(x,y)$ be a positive Neumann solution on this interval. Define
    \begin{equation}
        \left(\widehat{x}(t),\widehat{y}(t)\right)
        \coloneqq
        \begin{cases}
            \left(x(t),y(t)\right),
            &
            t\in
            \left[
                \frac{\tau}{2},
                \frac{T+\tau}{2}
            \right],\\[3mm]
            \left(x(\tau-t),-y(\tau-t)\right),
            &
            t\in
            \left[
                \frac{\tau-T}{2},
                \frac{\tau}{2}
            \right].
        \end{cases}
    \end{equation}
    The oddness of $h$ and the symmetry of the periodically extended weight imply that $(\widehat{x},\widehat{y})$ solves \eqref{eq:abstract-planar-system}. Its $T$-periodic extension is therefore a positive periodic solution. This construction is reversible and establishes a one-to-one correspondence between positive periodic solutions and positive Neumann solutions on the above half-period interval.

    On that interval, the positive and negative parts of the weight have lengths $\frac{\tau}{2}$ and $\frac{T-\tau}{2}$, respectively, and
    \begin{equation}
        \frac{
            \frac{\tau}{2}
        }{
            \frac{T-\tau}{2}
        }
        =
        \frac{\tau}{T-\tau}.
    \end{equation}
    Hence the parameter conditions of Theorem \ref{thm:main-result} for existence and uniqueness are the same for the Neumann and periodic problems.
\end{remark}

For $\gamma\neq-1$, expressions \eqref{eq:functions-FI-FII-special-case} will be the starting point of the normalization introduced in Section~\ref{subsec:non-logarithmic-case}. When $\gamma=-1$, the primitive $G_{-1}(u)=\log u$ does not have constant sign, so the multiplicative parameter $\rho$ is no longer suitable. The case $\gamma=-1$ will therefore be treated separately, starting from the time maps $\M_{\textup{I}}$ and $\M_{\textup{II}}$ in \eqref{eq:time-system-M}.

\section{Normalized time maps and proof of the main result}
\label{sec:proof-main-result}

\subsection{The case $\gamma\neq-1$}
\label{subsec:non-logarithmic-case}

We begin with the exponents $\gamma\neq-1$, for which the primitive $G_{\gamma}$ has constant sign and the parametrization \eqref{eq:time-system-F} is available. The key step is to normalize both time maps in \eqref{eq:time-system-F} so that their quotient can be expressed in terms of two integrals over the fixed interval $[0,1]$. The same normalization applies both when $\gamma>-1$ and when $\gamma<-1$, although the corresponding parameter domains are different.

For an admissible pair $(\omega,\rho)\in\mathcal{D}_{\gamma}$, we introduce the new parameter
\begin{equation}
    \label{eq:def-lambda-rho}
    \lambda=\lambda(\rho)
    \coloneqq
    \frac{\nu-\rho}{1+\rho}.
\end{equation}
This change of variable is strictly decreasing, since
\begin{equation}
    \label{eq:derivative-lambda-rho}
    \lambda'(\rho)=
    -\frac{1+\nu}{(1+\rho)^{2}}<0,
\end{equation}
and its inverse is given by
\begin{equation}
    \label{eq:inverse-lambda-rho}
    \rho=\frac{\nu-\lambda}{1+\lambda}.
\end{equation}
In view of \eqref{eq:def-domain-F}, the parameter intervals are
\begin{equation}
    \label{eq:domains-rho-lambda}
    \begin{array}{c|c|c}
        & \rho & \lambda \\
        \hline
        \gamma>-1 & (0,\nu) & (0,\nu) \\[1mm]
        \gamma<-1 & (\nu,+\infty) & (-1,0).
    \end{array}
\end{equation}

\noindent For $\lambda\in(-1,\nu)$, we define
\begin{equation}
    \label{eq:def-normalized-integrals}
    \begin{split}
        \mathcal{I}_{0,\gamma,p}(\lambda)
        &\coloneqq
        \int_{0}^{1}
        \frac{1}{
            (1-s)^{\frac{1}{p}}
            (1+\lambda s)^{\frac{\gamma}{\gamma+1}}
        }\dd s,\\[1mm]
        \mathcal{I}_{1,\gamma,p}(\lambda)
        &\coloneqq
        \int_{0}^{1}
        \frac{1}{
            (1-s)^{\frac{1}{p}}
            \left(1-\frac{\lambda}{\nu}s\right)^{\frac{\gamma}{\gamma+1}}
        }\dd s.
    \end{split}
\end{equation}
For each fixed $\lambda\in(-1,\nu)$, the linear factors in \eqref{eq:def-normalized-integrals} are strictly positive for $s\in(0,1)$ and thus, as $p>1$,  both integrals are finite and strictly positive. We then set
\begin{equation}
    \label{eq:def-Q-gamma-p}
    \mathcal{Q}_{\gamma,p}(\lambda)
    \coloneqq
    \nu
    \frac{
        \mathcal{I}_{0,\gamma,p}(\lambda)
    }{
        \mathcal{I}_{1,\gamma,p}(\lambda)
    }.
\end{equation}

\begin{lemma}[Normalized time-map quotient]
    \label{lem:normalized-time-map-quotient}
    Let $\gamma\neq-1$ and let
    $(\omega,\rho)\in\mathcal{D}_{\gamma}$. Then
    \begin{equation}
        \label{eq:ratio-FI-FII-Q}
        \frac{
            \F_{\textup{I}}(\omega,\rho)
        }{
            \F_{\textup{II}}(\omega,\rho)
        }
        =
        \mathcal{Q}_{\gamma,p}(\lambda(\rho)).
    \end{equation}
    Moreover, for every fixed admissible $\rho$, there exists a constant $C_{\gamma,p}(\rho)>0$ such that
    \begin{equation}
        \label{eq:FI-scale-dependence}
        \F_{\textup{I}}(\omega,\rho)
        =
        C_{\gamma,p}(\rho)
        \abs{\omega}^{1-\frac{1}{p}-\frac{\gamma}{\gamma+1}}.
    \end{equation}
\end{lemma}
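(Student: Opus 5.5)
The plan is to work directly from the explicit expressions \eqref{eq:functions-FI-FII-special-case}. Both time maps there share the same prefactor
\[
\left(\frac{p-1}{p}\right)^{\frac1p}\abs{\gamma+1}^{-\frac{\gamma}{\gamma+1}}a_{+}^{-\frac1p}\,\omega\abs{\omega}^{-\frac{\gamma}{\gamma+1}-\frac1p},
\]
so the quotient $\F_{\textup{I}}/\F_{\textup{II}}$ reduces to the quotient of the two $\xi$-integrals. That quotient does not depend on $\omega$. Hence I only need to map each $\xi$-integral affinely onto $[0,1]$, in such a way that the factor vanishing at one endpoint becomes $(1-s)^{1/p}$.

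The key computation uses $\mu=1/(1+\nu)$. Set
\[
d\coloneqq 1-\mu(1+\rho)=\frac{\nu-\rho}{1+\nu}.
\]
One checks that
\[
\frac{d}{\mu(1+\rho)}=\frac{\nu-\rho}{1+\rho}=\lambda,
\qquad
\mu(1+\rho)-\frac{\rho}{\nu}=\frac{d}{\nu}.
\]

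\emph{First integral.} Substitute $\xi=\mu(1+\rho)+d\,s$. Then
\[
1-\xi=d(1-s),
\qquad
\xi=\mu(1+\rho)(1+\lambda s),
\]
and the integral becomes
\[
\abs{d}^{1-\frac1p}\bigl(\mu(1+\rho)\bigr)^{-\frac{\gamma}{\gamma+1}}\mathcal{I}_{0,\gamma,p}(\lambda),
\]
up to the orientation sign.

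\emph{Second integral.} Substitute
\[
\xi=\frac{\rho}{\nu}+\frac{d}{\nu}(1-s)=\mu(1+\rho)\Bigl(1-\frac{\lambda}{\nu}s\Bigr).
\]
Then $\nu\xi-\rho=d(1-s)$ and $\dd\xi=-(d/\nu)\dd s$, so the integral becomes
\[
\nu^{-1}\abs{d}^{1-\frac1p}\bigl(\mu(1+\rho)\bigr)^{-\frac{\gamma}{\gamma+1}}\mathcal{I}_{1,\gamma,p}(\lambda),
\]
again up to sign. Note that $\mu(1+\rho)>0$ in both parameter regimes. Dividing the two expressions gives exactly $\nu\,\mathcal{I}_{0,\gamma,p}/\mathcal{I}_{1,\gamma,p}=\mathcal{Q}_{\gamma,p}(\lambda(\rho))$, which is \eqref{eq:ratio-FI-FII-Q}.

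For \eqref{eq:FI-scale-dependence}, the same substitution shows that $\F_{\textup{I}}(\omega,\rho)$ equals the prefactor times a $\rho$-dependent quantity that does not involve $\omega$. Writing $\omega\abs{\omega}^{-\frac{\gamma}{\gamma+1}-\frac1p}=\sign(\omega)\abs{\omega}^{1-\frac1p-\frac{\gamma}{\gamma+1}}$, I define $C_{\gamma,p}(\rho)$ as everything except the power of $\abs{\omega}$. Its positivity then follows from the strict positivity of $\F_{\textup{I}}$ on $\mathcal{D}_{\gamma}$ noted after \eqref{eq:functions-FI-FII-special-case}, together with $\mathcal{I}_{0,\gamma,p}(\lambda)>0$.

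The main obstacle I expect is the sign and orientation bookkeeping for $\gamma<-1$. In that case $\omega<0$, $\rho>\nu$, and hence $d<0$ and $\lambda\in(-1,0)$. So the integration limits in \eqref{eq:functions-FI-FII-special-case} are reversed, and the sign of $d$ must be tracked against $\sign\omega$. I would handle this by writing $d=\sign(d)\abs{d}$ in both substitutions and checking that the orientation sign is $\sign(d)=-1$ in each integral. These equal signs cancel in the quotient, and in $\F_{\textup{I}}$ the factor $\sign(d)$ cancels against $\sign(\omega)=-1$, so $C_{\gamma,p}(\rho)>0$. I also need $1+\lambda s>0$ and $1-\frac{\lambda}{\nu}s>0$ on $[0,1)$, which holds since $\lambda\in(-1,\nu)$; this justifies dropping the absolute values on the $\xi$-factors.
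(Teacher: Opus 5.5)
Your proposal is correct and follows the paper's proof essentially step by step. Your $d$ is the paper's $1-R$ with $R=\mu(1+\rho)$, your two affine substitutions coincide with the paper's $\xi=R+(1-R)s$ and $\hat{\xi}=R+(\eta-R)s$, and your observation that $\sign d=\sign\omega$ is the paper's remark that $1-R$ and $\omega$ both have the sign of $\gamma+1$. The only cosmetic difference is that you keep $\abs{\nu\xi-\rho}$ intact and let the factor $\nu$ come from the Jacobian. The paper instead first extracts $\nu^{1/p}$ and then uses $\abs{1-R}/\abs{R-\eta}=\nu$.
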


\begin{proof}
    Set
    \begin{equation}
        \label{eq:def-eta-R}
        \eta\coloneqq\frac{\rho}{\nu},
        \qquad
        R\coloneqq\mu(1+\rho)
        =\frac{1+\rho}{1+\nu}.
    \end{equation}
    If $\gamma>-1$, by \eqref{eq:order-omega-sigma-gamma_>-1}, then $\rho<\nu$, hence 
    \begin{equation}
    \label{eq:order-eta-R-gamma>-1}
        0<\eta<R<1,
    \end{equation}
    whereas, if $\gamma<-1$, by \eqref{eq:order-omega-sigma-gamma_<-1}, then
    \begin{equation}
    \label{eq:order-eta-R-gamma<-1}
        1<R<\eta.
    \end{equation}
    From \eqref{eq:functions-FI-FII-special-case}, up to re-orienting both integrals positively, we obtain the expression
    \begin{equation}
        \label{eq:ratio-before-normalization}
        \frac{
            \F_{\textup{I}}(\omega,\rho)
        }{
            \F_{\textup{II}}(\omega,\rho)
        }
        =
        \nu^{\frac{1}{p}}
        \frac{
            \displaystyle
            \int_{\min\{1,R\}}^{\max\{1,R\}}
            \frac{1}{
                \abs{1-\xi}^{\frac{1}{p}}
                \xi^{\frac{\gamma}{\gamma+1}}
            }
            \dd\xi
        }{
            \displaystyle
            \int_{\min\{\eta,R\}}^{\max\{\eta,R\}}
            \frac{1}{
                \abs{\hat{\xi}-\eta}^{\frac{1}{p}}
                \hat{\xi}^{\frac{\gamma}{\gamma+1}}
            }
            \dd\hat{\xi}
        }.
    \end{equation}
    The identities
    \begin{equation}
        \label{eq:normalized-length-identities}
        \frac{1}{R}-1=\lambda,
        \qquad
        1-\frac{\eta}{R}=\frac{\lambda}{\nu}
    \end{equation}
    hold in both cases, with both sides positive when $\gamma>-1$ and negative when $\gamma<-1$. Furthermore,
    \begin{equation}
        \label{eq:length-ratio-nu}
        \frac{\abs{1-R}}{\abs{R-\eta}}=\nu.
    \end{equation}
    We use the following changes of variable in the numerator and denominator of \eqref{eq:ratio-before-normalization}:
    \begin{equation}
        \label{eq:unified-affine-substitutions}
        \xi=R+(1-R)s,
        \qquad
        \hat{\xi}=R+(\eta-R)s,
        \qquad s\in[0,1].
    \end{equation}
    We get
    \begin{equation}
        \begin{aligned}
            \abs{1-\xi}
            &=\abs{1-R}(1-s),
            &\qquad
            \xi&=R(1+\lambda s),\\
            \abs{\hat{\xi}-\eta}
            &=\abs{R-\eta}(1-s),
            &\qquad
            \hat{\xi}&=R\left(1-\frac{\lambda}{\nu}s\right).
        \end{aligned}
    \end{equation}
    To keep the positive orientation of both integrals, we take the absolute value of the Jacobian of substitutions \eqref{eq:unified-affine-substitutions}, yielding
    \begin{equation}
        \begin{split}
            &\int_{\min\{1,R\}}^{\max\{1,R\}}
            \frac{1}{
                \abs{1-\xi}^{\frac{1}{p}}
                \xi^{\frac{\gamma}{\gamma+1}}
            }
            \dd\xi=
            \abs{1-R}^{1-\frac{1}{p}}
            R^{-\frac{\gamma}{\gamma+1}}
            \mathcal{I}_{0,\gamma,p}(\lambda),\\[2mm]
            &\int_{\min\{\eta,R\}}^{\max\{\eta,R\}}
            \frac{1}{
                \abs{\hat{\xi}-\eta}^{\frac{1}{p}}
                \hat{\xi}^{\frac{\gamma}{\gamma+1}}
            }
            \dd\hat{\xi}=
            \abs{R-\eta}^{1-\frac{1}{p}}
            R^{-\frac{\gamma}{\gamma+1}}
            \mathcal{I}_{1,\gamma,p}(\lambda).
        \end{split}
    \end{equation}
    Taking the ratio between these two expressions and recalling identity \eqref{eq:length-ratio-nu}, we find
    \begin{equation}
        \frac{
            \F_{\textup{I}}(\omega,\rho)
        }{
            \F_{\textup{II}}(\omega,\rho)
        }
        =
        \nu\hspace{1mm}
        \frac{
            \mathcal{I}_{0,\gamma,p}(\lambda)
        }{
            \mathcal{I}_{1,\gamma,p}(\lambda)
        },
    \end{equation}
    which proves \eqref{eq:ratio-FI-FII-Q}.

    The first of the preceding integral identities, together with the formula for $\F_{\textup{I}}$ in
    \eqref{eq:functions-FI-FII-special-case}, also gives
    \begin{equation}
            \F_{\textup{I}}(\omega,\rho)
            =
            \left(
            \left(
                \frac{p-1}{p}
            \right)^{\frac{1}{p}}
            \frac{
                \abs{\gamma+1}^{-\frac{\gamma}{\gamma+1}}
            }{
                a_{+}^{\frac{1}{p}}
            }
            \frac{1-R}{
            \abs{1-R}^{\frac{1}{p}}
            }
            R^{-\frac{\gamma}{\gamma+1}}
            \mathcal{I}_{0,\gamma,p}(\lambda(\rho)) 
            \right)\omega\abs{\omega}^{-\frac{\gamma}{\gamma+1}-\frac{1}{p}}.
    \end{equation}
    By \eqref{eq:def-omega-sigma} and the two inequalities \eqref{eq:order-eta-R-gamma>-1} and \eqref{eq:order-eta-R-gamma<-1}, the two factors $1-R$ and $\omega$ have the same sign (equal to $\sign(\gamma+1)$) and so it is possible to replace their product with the product of their absolute values, getting 
    \begin{equation}
        \begin{split}
            \F_{\textup{I}}(\omega,\rho)&=C_{\gamma,p}(\rho)\abs{\omega}^{1-\frac{1}{p}-\frac{\gamma}{\gamma+1}},
            \\[2mm]
            \text{with }C_{\gamma,p}(\rho)&= \left(
                \frac{p-1}{p}
            \right)^{\frac{1}{p}}
            \frac{
                \abs{\gamma+1}^{-\frac{\gamma}{\gamma+1}}
            }{
                a_{+}^{\frac{1}{p}}
            }
            \abs{1-R}^{1-\frac{1}{p}}
            R^{-\frac{\gamma}{\gamma+1}}
            \mathcal{I}_{0,\gamma,p}(\lambda(\rho)) .
        \end{split}
    \end{equation}
    This proves \eqref{eq:FI-scale-dependence}.
\end{proof}

It follows from \eqref{eq:ratio-FI-FII-Q} that the second equation of the time-map system can be replaced by the scalar equation
\begin{equation}
    \label{eq:scalar-equation-Q}
    \mathcal{Q}_{\gamma,p}(\lambda(\rho))
    =
    \frac{\tau}{T-\tau}.
\end{equation}
Indeed, once $\F_{\textup{I}}(\omega,\rho)=\tau$, equation \eqref{eq:scalar-equation-Q} is equivalent to $\F_{\textup{II}}(\omega,\rho)=T-\tau$.

The advantage of the normalization is that the monotonicity of $\mathcal{Q}_{\gamma,p}$ follows directly by taking its logarithmic derivative, as shown in the following Lemma.

\begin{lemma}[Monotonicity of $\mathcal{Q}_{\gamma,p}$]
    \label{lem:monotonicity-Q-gamma-p}
    Let $\gamma\neq-1,0$. The function $\mathcal{Q}_{\gamma,p}$ is of class $C^{1}$ on $(-1,\nu)$, and
    \begin{equation}
        \label{eq:sign-derivative-Q}
        \sign\mathcal{Q}_{\gamma,p}'(\lambda)
        =
        -\sign\left(
            \frac{\gamma}{\gamma+1}
        \right).
    \end{equation}
    Consequently, the function $\rho\mapsto\mathcal{Q}_{\gamma,p}(\lambda(\rho))$ is strictly increasing if $\gamma<-1$ or $\gamma>0$, and strictly decreasing if $-1<\gamma<0$.
\end{lemma}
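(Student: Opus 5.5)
The plan is to compute the logarithmic derivative of $\mathcal{Q}_{\gamma,p}$ directly, by differentiating under the integral sign in \eqref{eq:def-normalized-integrals}. Throughout, set $c\coloneqq\frac{\gamma}{\gamma+1}$, which is nonzero since $\gamma\neq0$.

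\emph{Regularity.} Fix a compact interval $[\lambda_1,\lambda_2]\subset(-1,\nu)$. For $s\in[0,1]$ and $\lambda\in[\lambda_1,\lambda_2]$ we have the lower bounds
\begin{equation}
1+\lambda s\geq\min\{1,1+\lambda_1\}>0,
\qquad
1-\tfrac{\lambda}{\nu}s\geq\min\{1,1-\tfrac{\lambda_2}{\nu}\}>0.
\end{equation}
Hence both integrands of \eqref{eq:def-normalized-integrals} and their $\lambda$-derivatives are bounded by $C(1-s)^{-1/p}$. This function is integrable because $p>1$. Dominated convergence then shows that $\mathcal{I}_{0,\gamma,p}$ and $\mathcal{I}_{1,\gamma,p}$ are $C^{1}$ on $(-1,\nu)$, with derivatives obtained by differentiating under the integral. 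Both integrals are strictly positive, so $\mathcal{Q}_{\gamma,p}$ is $C^{1}$ and positive.

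\emph{Derivative computation.} Writing $w(s)\coloneqq(1-s)^{-1/p}$, differentiation under the integral gives
\begin{equation}
\mathcal{I}_{0,\gamma,p}'(\lambda)=-c\int_{0}^{1}s\,w(s)(1+\lambda s)^{-c-1}\dd s\eqqcolon-c\,A_0(\lambda),
\end{equation}
and
\begin{equation}
\mathcal{I}_{1,\gamma,p}'(\lambda)=\frac{c}{\nu}\int_{0}^{1}s\,w(s)\Bigl(1-\frac{\lambda}{\nu}s\Bigr)^{-c-1}\dd s\eqqcolon\frac{c}{\nu}\,A_1(\lambda).
\end{equation}
Here $A_0,A_1>0$, since their integrands are positive on $(0,1)$. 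Therefore
\begin{equation}
\frac{\mathcal{Q}_{\gamma,p}'(\lambda)}{\mathcal{Q}_{\gamma,p}(\lambda)}
=\frac{\mathcal{I}_{0,\gamma,p}'(\lambda)}{\mathcal{I}_{0,\gamma,p}(\lambda)}-\frac{\mathcal{I}_{1,\gamma,p}'(\lambda)}{\mathcal{I}_{1,\gamma,p}(\lambda)}
=-c\left(\frac{A_0(\lambda)}{\mathcal{I}_{0,\gamma,p}(\lambda)}+\frac{A_1(\lambda)}{\nu\,\mathcal{I}_{1,\gamma,p}(\lambda)}\right).
\end{equation}
The bracket is strictly positive because $\nu>0$. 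Since $\mathcal{Q}_{\gamma,p}>0$, this yields \eqref{eq:sign-derivative-Q}. The key point is that the two contributions enter with the same sign: the factor $(1+\lambda s)$ increases in $\lambda$ while $(1-\frac{\lambda}{\nu}s)$ decreases, and the quotient structure flips one sign once more.

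\emph{Conclusion.} By \eqref{eq:derivative-lambda-rho}, $\lambda$ is a strictly decreasing $C^{1}$ function of $\rho$, and by \eqref{eq:domains-rho-lambda} it maps each admissible $\rho$-interval into $(-1,\nu)$. The chain rule gives
\begin{equation}
\sign\frac{\dd}{\dd\rho}\mathcal{Q}_{\gamma,p}(\lambda(\rho))=\sign c=\sign\frac{\gamma}{\gamma+1}.
\end{equation}
This quantity is positive for $\gamma<-1$ or $\gamma>0$, and negative for $-1<\gamma<0$, as claimed. I expect no real obstacle here. The only step requiring care is justifying differentiation under the integral near $s=1$, and that is handled by the uniform lower bounds on the linear factors above.
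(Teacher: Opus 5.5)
Your proof is correct and follows the paper's argument step for step. You use domination by a constant multiple of $(1-s)^{-1/p}$ on compact subintervals to differentiate under the integral, write the logarithmic derivative of $\mathcal{Q}_{\gamma,p}$ as $-\frac{\gamma}{\gamma+1}$ times a sum of two positive terms, and conclude by the chain rule with $\lambda'(\rho)<0$; the only cosmetic difference is that the paper writes those two terms as weighted means against $w_0,w_1$, while you keep them as $A_0/\mathcal{I}_{0,\gamma,p}$ and $A_1/(\nu\,\mathcal{I}_{1,\gamma,p})$.
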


\begin{proof}
    On every compact subinterval of $(-1,\nu)$, the two linear factors in \eqref{eq:def-normalized-integrals} are bounded away from zero. The integrands and their partial derivatives with respect to $\lambda$ are therefore dominated by a constant multiple of $(1-s)^{-\frac{1}{p}}$. Since $p>1$, differentiation under the integral sign is justified by the dominated convergence theorem.

    We define the positive weights
    \begin{equation}
        \label{eq:def-weights-w0-w1}
            w_{0}(s,\lambda)
            \coloneqq
            \frac{1}{
                (1-s)^{\frac{1}{p}}
                (1+\lambda s)^{\frac{\gamma}{\gamma+1}}
            },\quad
            w_{1}(s,\lambda)
            \coloneqq
            \frac{1}{
                (1-s)^{\frac{1}{p}}
                \left(1-\frac{\lambda}{\nu}s\right)^{\frac{\gamma}{\gamma+1}}
            }.
    \end{equation}
    The logarithmic derivative (with respect to $\lambda$) of $\mathcal{I}_{0,\gamma,p}(\lambda)$ and $\mathcal{I}_{1,\gamma,p}(\lambda)$ gives
    \begin{equation}
        \label{eq:log-derivative-I0}
        \frac{\dd}{\dd\lambda}
        \log\mathcal{I}_{0,\gamma,p}(\lambda)
        =
        -\frac{\gamma}{\gamma+1}
        \frac{
            \displaystyle
            \int_{0}^{1}
            w_{0}(s,\lambda)
            \frac{s}{1+\lambda s}
            \dd s
        }{
            \displaystyle
            \int_{0}^{1}w_{0}(s,\lambda)\dd s
        }
    \end{equation}
    and
    \begin{equation}
        \label{eq:log-derivative-I1}
        \frac{\dd}{\dd\lambda}
        \log\mathcal{I}_{1,\gamma,p}(\lambda)
        =
        \frac{\gamma}{\gamma+1}
        \frac{
            \displaystyle
            \int_{0}^{1}
            w_{1}(s,\lambda)
            \frac{\frac{s}{\nu}}{1-\frac{\lambda}{\nu}s}
            \dd s
        }{
            \displaystyle
            \int_{0}^{1}w_{1}(s,\lambda)\dd s
        }.
    \end{equation}
    Hence
    \begin{equation}
        \begin{split}
            \frac{\dd}{\dd\lambda}
            \log\mathcal{Q}_{\gamma,p}(\lambda)
            &=-\frac{\gamma}{\gamma+1}
            \left(
                \frac{
                    \displaystyle
                    \int_{0}^{1}
                    w_{0}(s,\lambda)
                    \frac{s}{1+\lambda s}
                    \dd s
                }{
                    \displaystyle
                    \int_{0}^{1}w_{0}(s,\lambda)\dd s
                }
                +
                \frac{
                    \displaystyle
                    \int_{0}^{1}
                    w_{1}(s,\lambda)
                    \frac{\frac{s}{\nu}}{1-\frac{\lambda}{\nu}s}
                    \dd s
                }{
                    \displaystyle
                    \int_{0}^{1}w_{1}(s,\lambda)\dd s
                }
            \right).
        \end{split}
    \end{equation}
    Both quotients within parentheses are strictly positive. Since
    $\mathcal{Q}_{\gamma,p}>0$ and 
    \begin{equation}
        \frac{\mathcal{Q}_{\gamma,p}'(\lambda)}{\mathcal{Q}_{\gamma,p}(\lambda)}=\frac{\dd}{\dd\lambda}
            \log\mathcal{Q}_{\gamma,p}(\lambda),
    \end{equation}
    this proves \eqref{eq:sign-derivative-Q}. The final assertion follows from \eqref{eq:derivative-lambda-rho}.
\end{proof}

In light of Lemmas \ref{lem:normalized-time-map-quotient} and \ref{lem:monotonicity-Q-gamma-p}, in order to analyse the solutions of equation \eqref{eq:scalar-equation-Q}, it is sufficient to check the value of $\mathcal{Q}_{\gamma,p}(\lambda)$ at the boundary.

At the common endpoint $\lambda=0$ for $\gamma<-1$ and $\gamma>-1$ \eqref{eq:domains-rho-lambda}, the two functions $\mathcal{I}_{0,\gamma,p}(\lambda),\mathcal{I}_{1,\gamma,p}(\lambda)$ are continuous at $\lambda=0$ and have the same value there:
\begin{equation}
    \mathcal{I}_{0,\gamma,p}(0)
    =
    \mathcal{I}_{1,\gamma,p}(0)
    =
    \int_{0}^{1}
    \frac{1}{(1-s)^{\frac{1}{p}}}
    \dd s.
\end{equation}
Therefore,
\begin{equation}
    \label{eq:Q-at-zero}
    \mathcal{Q}_{\gamma,p}(0)=\nu.
\end{equation}

\noindent Now we analyse the other boundary value, considering separately the three cases $-1<\gamma<p-1,\gamma<-1$ and $\gamma>p-1$ with $\gamma\neq0$.

Suppose first that
\begin{equation}
    -1<\gamma<p-1,
    \qquad \gamma\neq0.
\end{equation}
In this range,
\begin{equation}
    \frac{1}{p}+\frac{\gamma}{\gamma+1}<1.
\end{equation}
For $0<\lambda<\nu$, both integrands in \eqref{eq:def-normalized-integrals} are bounded, uniformly in $\lambda$, by a constant multiple of
\begin{equation}
    (1-s)^{-\frac{1}{p}}
    +
    (1-s)^{-\frac{1}{p}-\frac{\gamma}{\gamma+1}},
\end{equation}
which is integrable on $(0,1)$ in the present exponent range. The dominated convergence theorem therefore gives
\begin{equation}
    \label{eq:Q-at-nu-K}
        \lim_{\lambda\to\nu^{-}}
        \mathcal{Q}_{\gamma,p}(\lambda)
        =
        \nu
        \frac{
            \displaystyle
            \int_{0}^{1}
            \frac{1}{
                (1-s)^{\frac{1}{p}}
                (1+\nu s)^{\frac{\gamma}{\gamma+1}}
            }
            \dd s
        }{
            \displaystyle
            \int_{0}^{1}
            \frac{1}{
                (1-s)^{\frac{1}{p}+\frac{\gamma}{\gamma+1}}
            }
            \dd s
        }
        =\nu K_{\gamma,p}(\nu).
\end{equation}
Combining \eqref{eq:Q-at-zero}, \eqref{eq:Q-at-nu-K}, and Lemma~\ref{lem:monotonicity-Q-gamma-p}, we also obtain
\begin{equation}
    \label{eq:ordering-K-nu}
    K_{\gamma,p}(\nu)<1
    \quad\text{if }0<\gamma<p-1,
    \qquad
    K_{\gamma,p}(\nu)>1
    \quad\text{if }-1<\gamma<0.
\end{equation}

Next, suppose that $\gamma>p-1$, then
\begin{equation}
    \frac{1}{p}+\frac{\gamma}{\gamma+1}>1.
\end{equation}
As $\lambda\to\nu^{-}$, the numerator $\mathcal{I}_{0,\gamma,p}(\lambda)$ converges to a finite positive value, whereas weights $w_{1}(s,\lambda)$ \eqref{eq:def-weights-w0-w1} form an increasing pointwise family of positive functions. The monotone convergence theorem gives
\begin{equation}
    \mathcal{I}_{1,\gamma,p}(\lambda)
    \longrightarrow
    \int_{0}^{1}
    \frac{1}{
        (1-s)^{\frac{1}{p}+\frac{\gamma}{\gamma+1}}
    }
    \dd s
    =+\infty.
\end{equation}
Consequently,
\begin{equation}
    \label{eq:Q-super-p-endpoint}
    \lim_{\lambda\to\nu^{-}}
    \mathcal{Q}_{\gamma,p}(\lambda)=0.
\end{equation}

Finally, let $\gamma<-1$. In this case, $\frac{1}{p}+\frac{\gamma}{\gamma+1}>1$. As before, for $\lambda\to-1^{+}$, weights $w_{0}(s,\lambda)$ \eqref{eq:def-weights-w0-w1} form an increasing pointwise family of positive functions. The monotone convergence theorem yields again
\begin{equation}
    \mathcal{I}_{0,\gamma,p}(\lambda)
    \longrightarrow
    \int_{0}^{1}
    \frac{1}{
        (1-s)^{\frac{1}{p}+\frac{\gamma}{\gamma+1}}
    }
    \dd s
    =+\infty.
\end{equation}
On the other hand,
\begin{equation}
    \lim_{\lambda\to-1^{+}}
    \mathcal{I}_{1,\gamma,p}(\lambda)
    =
    \int_{0}^{1}
    \frac{1}{
        (1-s)^{\frac{1}{p}}
        \left(1+\frac{s}{\nu}\right)^{\frac{\gamma}{\gamma+1}}
    }
    \dd s <+\infty.
\end{equation}
It follows that
\begin{equation}
    \label{eq:Q-strongly-singular-endpoint}
    \lim_{\lambda\to-1^{+}}
    \mathcal{Q}_{\gamma,p}(\lambda)=+\infty.
\end{equation}

The preceding endpoint analysis, together with Lemma~\ref{lem:monotonicity-Q-gamma-p}, yields the following complete description of the time-map ratio.

\begin{corollary}
    \label{cor:range-normalized-quotient}
    Let $p>1$ and let $\gamma\in\R\setminus\{-1,0,p-1\}$. Then the map
    \begin{equation}
        \rho
        \mapsto
        \mathcal{Q}_{\gamma,p}\left(\lambda(\rho)\right)
    \end{equation}
    has the following properties:
    \begin{enumerate}[label=\textup{(\roman*)}]
        \item if $\gamma>p-1$, it is a strictly increasing bijection
        from $(0,\nu)$ onto $(0,\nu)$;
        \item if $0<\gamma<p-1$, it is a strictly increasing bijection
        from $(0,\nu)$ onto $(\nu K_{\gamma,p}(\nu),\nu)$;
        \item if $-1<\gamma<0$, it is a strictly decreasing bijection
        from $(0,\nu)$ onto $(\nu,\nu K_{\gamma,p}(\nu))$;
        \item if $\gamma<-1$, it is a strictly increasing bijection
        from $(\nu,+\infty)$ onto $(\nu,+\infty)$.
    \end{enumerate}
\end{corollary}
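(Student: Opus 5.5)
The corollary follows by combining Lemma~\ref{lem:monotonicity-Q-gamma-p} with the endpoint values \eqref{eq:Q-at-zero}, \eqref{eq:Q-at-nu-K}, \eqref{eq:Q-super-p-endpoint}, \eqref{eq:Q-strongly-singular-endpoint}, transported through the change of variable $\rho\mapsto\lambda(\rho)$.

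The plan starts from the map $\lambda$ in \eqref{eq:def-lambda-rho}. It is a strictly decreasing $C^1$ homeomorphism, by \eqref{eq:derivative-lambda-rho}. By \eqref{eq:domains-rho-lambda}, it sends $(0,\nu)$ onto $(0,\nu)$ with $\rho\to0^+$ corresponding to $\lambda\to\nu^-$ and $\rho\to\nu^-$ to $\lambda\to0^+$. It also sends $(\nu,+\infty)$ onto $(-1,0)$, with $\rho\to\nu^+$ corresponding to $\lambda\to0^-$ and $\rho\to+\infty$ to $\lambda\to-1^+$.

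The composite $\rho\mapsto\mathcal{Q}_{\gamma,p}(\lambda(\rho))$ is continuous, since $\mathcal{Q}_{\gamma,p}$ is $C^1$ on $(-1,\nu)$ by Lemma~\ref{lem:monotonicity-Q-gamma-p}. The same lemma says it is strictly increasing for $\gamma<-1$ or $\gamma>0$ and strictly decreasing for $-1<\gamma<0$. A continuous strictly monotone function on an open interval is a bijection onto the open interval bounded by its two one-sided limits. So in each case it only remains to identify those limits.

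\begin{itemize}
\item For $\rho\to\nu$ (that is, $\lambda\to0$), continuity of $\mathcal{Q}_{\gamma,p}$ at $0\in(-1,\nu)$ and \eqref{eq:Q-at-zero} give the limit $\nu$ in all four cases.
\item For $\gamma>p-1$, the limit as $\rho\to0^+$ is $0$ by \eqref{eq:Q-super-p-endpoint}. Since the composite is increasing, its range is $(0,\nu)$.
\item For $0<\gamma<p-1$, the limit as $\rho\to0^+$ is $\nu K_{\gamma,p}(\nu)$ by \eqref{eq:Q-at-nu-K}. The range is $(\nu K_{\gamma,p}(\nu),\nu)$, and this interval is nonempty by \eqref{eq:ordering-K-nu}.
\item For $-1<\gamma<0$, the limit as $\rho\to0^+$ is again $\nu K_{\gamma,p}(\nu)$, now greater than $\nu$. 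Since the composite is decreasing, its range is $(\nu,\nu K_{\gamma,p}(\nu))$.
\item For $\gamma<-1$, the limit as $\rho\to+\infty$ is $+\infty$ by \eqref{eq:Q-strongly-singular-endpoint}. The range is $(\nu,+\infty)$.
\end{itemize}

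The only delicate point is bookkeeping: orientation is reversed twice, once by $\lambda$ and once by the sign in \eqref{eq:sign-derivative-Q}. Lemma~\ref{lem:monotonicity-Q-gamma-p} already states the monotonicity directly in $\rho$, so this is routine. The strict inequalities in \eqref{eq:ordering-K-nu} guarantee that the intervals in (ii) and (iii) are genuine.
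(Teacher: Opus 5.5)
Your proposal is correct and follows the paper's own argument, which obtains the corollary by combining the strict monotonicity in Lemma~\ref{lem:monotonicity-Q-gamma-p} with the endpoint limits \eqref{eq:Q-at-zero}, \eqref{eq:Q-at-nu-K}, \eqref{eq:Q-super-p-endpoint} and \eqref{eq:Q-strongly-singular-endpoint}, transported through the decreasing change of variable $\lambda(\rho)$. Your matching of the $\rho$-endpoints with the $\lambda$-endpoints is correct. So is your continuity and strict-monotonicity argument for surjectivity onto the open interval between the one-sided limits; both just make explicit what the paper leaves implicit.
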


We can now solve the time-map system completely for every nondegenerate exponent $\gamma\neq-1$.

\begin{prop}
    \label{prop:non-logarithmic-exponents}
    For $p>1$ and $\gamma\in\R\setminus\{-1,0,p-1\}$, the conclusions of Theorem \ref{thm:main-result} hold.
\end{prop}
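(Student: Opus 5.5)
We reduce everything to the time-map system and count its solutions. By Proposition~\ref{prop:time-map-characterization-M} and Remark~\ref{remark:periodic-problem}, it suffices to treat the Neumann problem: positive solutions correspond bijectively to pairs $(\omega,\sigma)\in\mathcal{O}$ solving \eqref{eq:time-map-system-M}. For $\gamma\neq-1$, the map $(\omega,\sigma)\mapsto(\omega,\rho)$ with $\rho=\nu\sigma/\omega$ is a bijection from $\mathcal{O}$ onto $\mathcal{D}_{\gamma}$. Hence we must count solutions $(\omega,\rho)\in\mathcal{D}_{\gamma}$ of \eqref{eq:time-map-system-F}.

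By Lemma~\ref{lem:normalized-time-map-quotient}, this system is equivalent to
\begin{equation*}
    \F_{\textup{I}}(\omega,\rho)=\tau,
    \qquad
    \mathcal{Q}_{\gamma,p}(\lambda(\rho))=\frac{\tau}{T-\tau}.
\end{equation*}
The second equation involves only $\rho$. By Corollary~\ref{cor:range-normalized-quotient}, $\rho\mapsto\mathcal{Q}_{\gamma,p}(\lambda(\rho))$ is a strictly monotone bijection onto an explicit open interval $J_{\gamma,p}$. So it has a solution $\rho^{\ast}$ if and only if $\tau/(T-\tau)\in J_{\gamma,p}$, and then $\rho^{\ast}$ is unique.

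With $\rho^{\ast}$ fixed, we solve $\F_{\textup{I}}(\omega,\rho^{\ast})=\tau$ using \eqref{eq:FI-scale-dependence}, which gives $C_{\gamma,p}(\rho^{\ast})\abs{\omega}^{e}=\tau$ with
\begin{equation*}
    e=1-\frac1p-\frac{\gamma}{\gamma+1}=\frac{p-1-\gamma}{p(\gamma+1)}.
\end{equation*}
Since $\gamma\neq p-1$, we have $e\neq0$. So $\abs{\omega}\mapsto C\abs{\omega}^{e}$ is a strictly monotone bijection of $(0,+\infty)$ onto itself, and the sign of $\omega$ is fixed by $\mathcal{D}_{\gamma}$ (positive for $\gamma>-1$, negative for $\gamma<-1$). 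This yields exactly one admissible $\omega$. Thus the system has either no solution or exactly one, which proves uniqueness.

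It remains to translate $\tau/(T-\tau)\in J_{\gamma,p}$ into the conditions of Theorem~\ref{thm:main-result}, case by case:
\begin{itemize}
    \item for $\gamma>p-1$, $J=(0,\nu)$, i.e.\ $\tau/(T-\tau)<\nu$, which is $\bar a<0$ by \eqref{eq:sign-mean-weight};
    \item for $\gamma<-1$, $J=(\nu,+\infty)$, which gives $\bar a>0$;
    \item for $0<\gamma<p-1$, $J=(\nu K_{\gamma,p}(\nu),\nu)$;
    \item for $-1<\gamma<0$, $J=(\nu,\nu K_{\gamma,p}(\nu))$.
\end{itemize}

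The only step requiring some care is the equivalence of the $K$-inequalities with the conditions on $\bar a$. Set $x=\tau/(T-\tau)>0$. From \eqref{eq:mean-weight-nu}, using $\tau/T=x/(1+x)$, we get
\begin{equation*}
    \bar a=\frac{a_{+}(x-\nu)}{1+x}.
\end{equation*}
The map $x\mapsto(x-\nu)/(1+x)$ is strictly increasing, since its derivative is $(1+\nu)/(1+x)^{2}>0$. Evaluating at $x=\nu K$ gives $a_{+}\nu(K-1)/(1+\nu K)=\bar a^{\ast}_{\gamma,p}$, and at $x=\nu$ it gives $0$. By monotonicity, $\nu K<x<\nu$ is equivalent to $\bar a^{\ast}_{\gamma,p}<\bar a<0$, and $\nu<x<\nu K$ is equivalent to $0<\bar a<\bar a^{\ast}_{\gamma,p}$. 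These intervals are nonempty by \eqref{eq:ordering-K-nu}. This completes the plan.
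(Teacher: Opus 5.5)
Your proposal is correct and follows the paper's proof essentially step by step. The paper determines a unique $\rho$ from the strictly monotone bijection of Corollary~\ref{cor:range-normalized-quotient}, then a unique $\omega$ from the scaling law \eqref{eq:FI-scale-dependence}, whose exponent is nonzero because $\gamma\neq p-1$, and it obtains the periodic case from Remark~\ref{remark:periodic-problem}. Your only addition is that you carry out the ``easy computations'' the paper leaves implicit: the equivalence of the $K$-inequalities with the conditions on $\bar a$, through the increasing map $x\mapsto a_{+}(x-\nu)/(1+x)$, and this computation is correct.
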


\begin{proof}
    For each of conditions \ref{item:main-super-p-linear}-\ref{item:main-singular}, recalling definition \eqref{eq:def-critic-mean-a} easy computations give the equivalence between the two different formulations. By \eqref{eq:scalar-equation-Q} and Corollary \ref{cor:range-normalized-quotient}, the stated conditions are necessary and sufficient for the existence of an admissible $\rho$ solving the equation \eqref{eq:scalar-equation-Q}. Lemma~\ref{lem:monotonicity-Q-gamma-p} moreover shows that the value of $\rho$ is unique.

    It remains to recover $\omega$. Since $\gamma\neq p-1$, one has
    \begin{equation}
    \label{eq:critical-exponent-no-zero}
        1-\frac{1}{p}-\frac{\gamma}{\gamma+1}=\frac{p-1-\gamma}{p(\gamma+1)}\neq0.
    \end{equation}
    For the uniquely determined $\rho$, formula \eqref{eq:FI-scale-dependence} shows that equation
    \begin{equation}
        \F_{\textup{I}}(\omega,\rho)=\tau
    \end{equation}
    is equivalent to
    \begin{equation}
        \abs{\omega}^{1-\frac{1}{p}-\frac{\gamma}{\gamma+1}}
        =
        \frac{\tau}{C_{\gamma,p}(\rho)}.
    \end{equation}
    By \eqref{eq:critical-exponent-no-zero}, this equation has a unique solution $\omega$, as its sign is given by $\sign(\gamma+1)$. This gives the existence and uniqueness of the positive Neumann solution.

    The periodic case follows from Remark~\ref{remark:periodic-problem}.
\end{proof}

\begin{remark}[Degenerate exponents]
    \label{remark:degenerate-exponents-normalized-maps} 
    The preceding reduction also explains the two non-logarithmic exponents excluded from Theorem~\ref{thm:main-result}. If $\gamma=0$, then
    \begin{equation}
        \mathcal{Q}_{0,p}(\lambda)\equiv\nu.
    \end{equation}
    Consequently, the scalar equation
    \eqref{eq:scalar-equation-Q} can be satisfied only if
    \begin{equation}
        \frac{\tau}{T-\tau}=\nu\quad\text{or equivalently}\quad \bar{a}=0.
    \end{equation}
    In that case, the scalar equation does not determine $\rho$, consistently with the invariance of the original problem under translations that preserve positivity.
    
    Instead, if $\gamma=p-1$, then
    \begin{equation}
        1-\frac{1}{p}-\frac{\gamma}{\gamma+1}=0.
    \end{equation}
    For each fixed admissible $\rho$, both time maps are therefore independent of $\omega$, by \eqref{eq:FI-scale-dependence} and \eqref{eq:ratio-FI-FII-Q}. Whenever the two time-map equations are compatible, the scale of the solution remains undetermined, in agreement with the invariance of the original equation under positive rescaling.
\end{remark}

For $p=2$, the strict monotonicity established above provides an analytic proof of the behaviour of the time-map quotient observed numerically in \cite[Section~4.1]{BosFelZan2020}. The argument applies to every $p>1$ and remains entirely within the time-map framework. The only exponent not covered by this subsection is $\gamma=-1$, which requires an additive normalization and will be treated in the following subsection.

\subsection{The logarithmic case $\gamma=-1$}
\label{subsec:logarithmic-case}

We now consider the exponent $\gamma=-1$. In this case, the primitive of $g_{-1}(u)=u^{-1}$ is 
\begin{equation}
    G_{-1}(u)=\log u,
\end{equation}
which may be seen as a limit of primitives $G_{\gamma}$ for the case $\gamma\neq-1$:
\begin{equation}
    \label{eq:logarithmic-primitive-limit}
    G_{\gamma}(u)-\frac{1}{\gamma+1}
    =
    \frac{u^{\gamma+1}-1}{\gamma+1}
    \longrightarrow
    \log u
    \qquad\text{as }\gamma\to-1.
\end{equation}
The parameter $\rho$ introduced in the preceding subsection is no longer suitable, since $G_{-1}(\alpha)$ may vanish. Indeed, its definition relies on the constant sign and homogeneity of $G_{\gamma}$, whereas $G_{-1}(u)=\log u$ takes values on the whole real line. Moreover, since $\rho$ is defined through the ratio between $\omega$ and $\sigma$, it is invariant under their common multiplication by a positive constant. By contrast, multiplication of $u$ by a positive constant produces a common additive translation of $\log\alpha$ and $\log\beta$. This change from a multiplicative to an additive structure naturally suggests replacing the ratio-based normalization by one based on the difference between $\omega$ and $\sigma$.

Thus, as in \eqref{eq:def-omega-sigma}, we set
\begin{equation}
    \label{eq:logarithmic-primitive-variables}
    \omega\coloneqq\log\alpha,
    \qquad
    \sigma\coloneqq\log\beta,
\end{equation}
and, as $\alpha>\beta$, one has $\omega>\sigma$. Finally, we introduce the new parameter
\begin{equation}
    \label{eq:def-upsilon}
    \upsilon\coloneqq\omega-\sigma
    =
    \log\left(\frac{\alpha}{\beta}\right)>0.
\end{equation}
Moreover,
\begin{equation}
    G_{-1}^{-1}(\theta)=e^{\theta},
    \qquad
    \mathcal{L}_{g}(\theta)=e^{-\theta},
    \qquad \theta\in\R.
\end{equation}
Substituting these identities and \eqref{eq:Lh-p-laplacian} into \eqref{eq:time-system-M}, and then using
\begin{equation}
    \xi=\omega-\theta
    \qquad\text{and}\qquad
    \hat{\xi}=\theta-\sigma
\end{equation}
in the first and second integrals, respectively, gives
\begin{equation}
    \label{eq:logarithmic-time-maps}
    \begin{split}
        \M_{\textup{I}}(\omega,\omega-\upsilon)
        &=
        \left(
            \frac{p-1}{p a_{+}}
        \right)^{\frac{1}{p}}
        e^{\omega}\int_{0}^{(1-\mu)\upsilon}
        \xi^{-\frac{1}{p}}e^{-\xi}\dd\xi,\\[2mm]
        \M_{\textup{II}}(\omega,\omega-\upsilon)
        &=
        \left(
            \frac{p-1}{p a_{-}}
        \right)^{\frac{1}{p}}
        e^{\omega-\upsilon}
        \int_{0}^{\mu\upsilon}
        \hat{\xi}^{-\frac{1}{p}}e^{\hat{\xi}}\dd\hat{\xi}.
    \end{split}
\end{equation}
We note that the variable $\omega$ again cancels out by taking the ratio of the two time maps. After normalizing both intervals of integration to $[0,1]$ we obtain
\begin{equation}
    \label{eq:def-Q-minus-one-p}
    \mathcal{Q}_{-1,p}(\upsilon)
    \coloneqq
    \frac{
        \M_{\textup{I}}(\omega,\omega-\upsilon)
    }{
        \M_{\textup{II}}(\omega,\omega-\upsilon)
    }
    =
    \nu e^{\upsilon}
    \frac{
        \displaystyle
        \int_{0}^{1}
        s^{-\frac{1}{p}}
        e^{-(1-\mu)\upsilon s}\dd s
    }{
        \displaystyle
        \int_{0}^{1}
        s^{-\frac{1}{p}}
        e^{\mu\upsilon s}\dd s
    },
    \quad \upsilon>0,
\end{equation}
where we used \eqref{eq:def-mu-nu}. The next lemma provides the complete qualitative description of this quotient.

\begin{lemma}[Monotonicity and range of $\mathcal{Q}_{-1,p}$]
    \label{lem:logarithmic-time-map-quotient}
    Let $p>1$. The function $\mathcal{Q}_{-1,p}$ is a strictly increasing $C^{1}$ bijection from $(0,+\infty)$ onto $(\nu,+\infty)$.
\end{lemma}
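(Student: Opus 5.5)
We will work with the logarithm of the quotient in \eqref{eq:def-Q-minus-one-p}, mirroring the proof of Lemma~\ref{lem:monotonicity-Q-gamma-p}. Set
\begin{equation*}
    A(\upsilon)\coloneqq\int_{0}^{1}s^{-\frac{1}{p}}e^{-(1-\mu)\upsilon s}\dd s,
    \qquad
    B(\upsilon)\coloneqq\int_{0}^{1}s^{-\frac{1}{p}}e^{\mu\upsilon s}\dd s,
\end{equation*}
so that $\log\mathcal{Q}_{-1,p}(\upsilon)=\log\nu+\upsilon+\log A(\upsilon)-\log B(\upsilon)$.

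\textbf{Regularity.} On every compact subinterval of $[0,+\infty)$, the integrands of $A$ and $B$, and their $\upsilon$-derivatives, are dominated by a constant multiple of $s^{-1/p}$. This function is integrable because $p>1$. Dominated convergence then gives that $A$ and $B$ are positive and of class $C^{1}$, hence so is $\mathcal{Q}_{-1,p}$.

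\textbf{Monotonicity.} Differentiating gives
\begin{equation*}
    \frac{\dd}{\dd\upsilon}\log\mathcal{Q}_{-1,p}(\upsilon)
    =1-(1-\mu)\langle s\rangle_{A}-\mu\langle s\rangle_{B},
\end{equation*}
where $\langle s\rangle_{A}$ and $\langle s\rangle_{B}$ denote the means of $s$ on $[0,1]$ with respect to the probability densities proportional to $s^{-1/p}e^{-(1-\mu)\upsilon s}$ and $s^{-1/p}e^{\mu\upsilon s}$, respectively. Both means lie in $(0,1)$, since $s<1$ almost everywhere and the densities are positive. The right-hand side is therefore a strict convex combination of $1-\langle s\rangle_{A}>0$ and $1-\langle s\rangle_{B}>0$, so it is strictly positive. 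This proves strict monotonicity and is cleaner than the case $\gamma\neq-1$. Note that $\mu\in(0,1)$ is used here.

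\textbf{Endpoint at $0$.} As $\upsilon\to0^{+}$, dominated convergence gives $A,B\to\int_0^1 s^{-1/p}\dd s$. Hence $\mathcal{Q}_{-1,p}(\upsilon)\to\nu$, consistent with \eqref{eq:Q-at-zero}.

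\textbf{Endpoint at $+\infty$.} This is the step needing the most care. One option is to bound $A(\upsilon)\ge\int_0^1 s^{-1/p}e^{-(1-\mu)\upsilon}\dd s$, so that $e^{\upsilon}A\ge c\,e^{\mu\upsilon}$. This must be compared with $B(\upsilon)\le\int_0^1 s^{-1/p}\dd s\cdot e^{\mu\upsilon}$, which only gives a ratio bounded below, not growth. So we need sharper asymptotics:
\begin{itemize}
    \item The substitution $r=(1-\mu)\upsilon s$ gives $A(\upsilon)\sim\Gamma(1-\tfrac1p)\,((1-\mu)\upsilon)^{-(1-1/p)}$.
    \item Near $s=1$, a Laplace-type estimate gives $B(\upsilon)\sim e^{\mu\upsilon}/(\mu\upsilon)$. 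Concretely, integrate by parts, or bound $B\le C_{\delta}e^{\mu\upsilon(1-\delta)}+(1-\delta)^{-1/p}e^{\mu\upsilon}/(\mu\upsilon)$.
\end{itemize}
Combining the two,
\begin{equation*}
    \mathcal{Q}_{-1,p}(\upsilon)\gtrsim \nu\, e^{\upsilon}\,\upsilon^{-1+1/p}\cdot\upsilon\, e^{-\mu\upsilon}
    = \nu\,\upsilon^{1/p}e^{(1-\mu)\upsilon}\to+\infty .
\end{equation*}
Strict monotonicity, continuity and these two limits then yield the bijection onto $(\nu,+\infty)$.
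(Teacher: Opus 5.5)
Your proposal is correct and follows the paper's proof step for step: the same domination by $s^{-1/p}$, the same logarithmic derivative written as $(1-\mu)(1-\langle s\rangle_{A})+\mu(1-\langle s\rangle_{B})>0$, and the same endpoint analysis. The only difference is at $+\infty$, where your Laplace estimate for $B$ is unnecessary, since the weakness of your first attempt came from the crude lower bound on $A$. The paper sharpens only that side, via $A(\upsilon)\ge\int_{0}^{1/\upsilon}s^{-1/p}e^{-(1-\mu)\upsilon s}\dd s\ge e^{-(1-\mu)}\frac{p}{p-1}\upsilon^{-(p-1)/p}$ for $\upsilon\ge1$, keeps $B(\upsilon)\le\frac{p}{p-1}e^{\mu\upsilon}$, and already obtains $\mathcal{Q}_{-1,p}(\upsilon)\ge\nu e^{(1-\mu)(\upsilon-1)}\upsilon^{-(p-1)/p}\to+\infty$.
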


\begin{proof}
    The two integral functions appearing in \eqref{eq:def-Q-minus-one-p} are strictly positive and of class $C^{1}$ on $(0,+\infty)$, with respect to $\upsilon$. Indeed, let $J\subset(0,+\infty)$ be a compact interval. On $J\times(0,1)$, the integrands are dominated by a constant multiple of the integrable function
    \begin{equation}
        s^{-\frac{1}{p}},
    \end{equation}
    whereas their derivatives with respect to $\upsilon$ are dominated by a constant multiple of
    \begin{equation}
        s^{1-\frac{1}{p}}.
    \end{equation}
    Since $p>1$, both functions belong to $L^{1}(0,1)$. Differentiation under the integral sign is therefore justified by the dominated convergence theorem. Since the denominator in \eqref{eq:def-Q-minus-one-p} is strictly positive, it follows that
    \begin{equation}
        \mathcal{Q}_{-1,p}\in C^{1}(0,+\infty).
    \end{equation}

    Define
    \begin{equation}
        I_{0}(\upsilon)
        \coloneqq
        \frac{
            \displaystyle
            \int_{0}^{1}
            s\,s^{-\frac{1}{p}}
            e^{-(1-\mu)\upsilon s}\dd s
        }{
            \displaystyle
            \int_{0}^{1}
            s^{-\frac{1}{p}}
            e^{-(1-\mu)\upsilon s}\dd s
        },
        \qquad
        I_{1}(\upsilon)
        \coloneqq
        \frac{
            \displaystyle
            \int_{0}^{1}
            s\,s^{-\frac{1}{p}}
            e^{\mu\upsilon s}\dd s
        }{
            \displaystyle
            \int_{0}^{1}
            s^{-\frac{1}{p}}
            e^{\mu\upsilon s}\dd s
        }.
    \end{equation}
    Each of these quantities can be regarded as the mean of $s$ with respect to a positive probability density on $(0,1)$. Hence
    \begin{equation}
        \label{eq:inequalities-two-integral-means}
        0<I_{0}(\upsilon)<1,
        \qquad
        0<I_{1}(\upsilon)<1.
    \end{equation}
    As done before, logarithmic differentiation of
    \eqref{eq:def-Q-minus-one-p} yields
    \begin{equation}
        \label{eq:logarithmic-derivative-Q-minus-one-p}
        \frac{\dd}{\dd\upsilon}
        \log\mathcal{Q}_{-1,p}(\upsilon)
        =
        1-(1-\mu)I_{0}(\upsilon)-\mu I_{1}(\upsilon)
        \stackrel{\eqref{eq:inequalities-two-integral-means}}{>}0.
    \end{equation}
    Since $\mathcal{Q}_{-1,p}$ is positive, it follows that
    $\mathcal{Q}_{-1,p}$ is strictly increasing.

    We now determine its behaviour at the endpoints. For $\upsilon\in(0,1]$, the integrands in the numerator and denominator of \eqref{eq:def-Q-minus-one-p} are bounded, respectively, by the integrable functions
    \begin{equation}
        s^{-\frac{1}{p}},
        \qquad
        s^{-\frac{1}{p}}e^{\mu s}.
    \end{equation}
    Therefore, by the dominated convergence theorem,
    \begin{equation}
        \label{eq:Q-minus-one-at-zero}
        \lim_{\upsilon\to0^{+}}
        \mathcal{Q}_{-1,p}(\upsilon)
        =
        \nu.
    \end{equation}

    It remains to determine the behaviour at $+\infty$. For
    $\upsilon\geq1$, one has
    \begin{equation}
        \int_{0}^{1}
        s^{-\frac{1}{p}}e^{-(1-\mu)\upsilon s}\dd s
        \geq
        \int_{0}^{1/\upsilon}s^{-\frac{1}{p}}
        e^{-(1-\mu)\upsilon s}\dd s
        \geq
        e^{-(1-\mu)}\frac{p}{p-1}
        \upsilon^{-\frac{p-1}{p}},
    \end{equation}
    whereas
    \begin{equation}
        \int_{0}^{1}
        s^{-\frac{1}{p}}
        e^{\mu\upsilon s}\dd s
        \leq
        e^{\mu\upsilon}
        \int_{0}^{1}s^{-\frac{1}{p}}\dd s
        =
        e^{\mu\upsilon}\frac{p}{p-1}.
    \end{equation}
    Consequently,
    \begin{equation}
        \begin{aligned}
        \mathcal{Q}_{-1,p}(\upsilon)
        &\geq
        \nu e^{\upsilon}
        \frac{
            e^{-(1-\mu)}
            \upsilon^{-\frac{p-1}{p}}
        }{
            e^{\mu\upsilon}
        }
        =
        \nu
        e^{(1-\mu)(\upsilon-1)}
        \upsilon^{-\frac{p-1}{p}},
        \qquad \upsilon\geq1.
        \end{aligned}
    \end{equation}
    Hence
    \begin{equation}
        \label{eq:Q-minus-one-at-infinity}
        \lim_{\upsilon\to+\infty}
        \mathcal{Q}_{-1,p}(\upsilon)
        =
        +\infty.
    \end{equation}
    The conclusion follows from the continuity, strict monotonicity, and boundary limits of $\mathcal{Q}_{-1,p}$.
\end{proof}

We can now complete the analysis of the $\gamma=-1$ case.

\begin{prop}[The logarithmic exponent]
    \label{prop:logarithmic-exponent}
    Let $p>1$ and $\gamma=-1$. The Neumann and periodic problems associated with \eqref{eq:main-ODE-phi-laplacian} admit a positive solution if and only if
    \begin{equation}
        \label{eq:existence-logarithmic-case}
        \bar{a}>0\quad\text{or equivalently}\quad\frac{\tau}{T-\tau}>\nu.
    \end{equation}
    Whenever a positive solution exists, it is unique.
\end{prop}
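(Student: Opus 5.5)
The proof will follow the same scheme as Proposition~\ref{prop:non-logarithmic-exponents}, with $(\omega,\upsilon)$ in place of $(\omega,\rho)$. By Proposition~\ref{prop:time-map-characterization-M} with $G=G_{-1}=\log$, positive Neumann solutions correspond bijectively to pairs $(\omega,\sigma)\in\mathcal{O}=\{(\omega,\sigma)\in\R^{2}\colon\omega>\sigma\}$ solving \eqref{eq:time-map-system-M}. The map $(\omega,\sigma)\mapsto(\omega,\upsilon)=(\omega,\omega-\sigma)$ is a bijection of $\mathcal{O}$ onto $\R\times(0,+\infty)$. So it suffices to count the solutions $(\omega,\upsilon)\in\R\times(0,+\infty)$ of
\begin{equation*}
    \M_{\textup{I}}(\omega,\omega-\upsilon)=\tau,
    \qquad
    \M_{\textup{II}}(\omega,\omega-\upsilon)=T-\tau .
\end{equation*}
Both maps are strictly positive, as \eqref{eq:logarithmic-time-maps} shows. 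Hence, once the first equation holds, the second is equivalent to $\mathcal{Q}_{-1,p}(\upsilon)=\tau/(T-\tau)$, by the definition \eqref{eq:def-Q-minus-one-p}.

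The first step is to solve this scalar equation. By Lemma~\ref{lem:logarithmic-time-map-quotient}, $\mathcal{Q}_{-1,p}$ is a strictly increasing bijection of $(0,+\infty)$ onto $(\nu,+\infty)$. Therefore the equation has a solution $\upsilon$ if and only if $\tau/(T-\tau)>\nu$, and this solution is unique. By \eqref{eq:sign-mean-weight}, the condition $\tau/(T-\tau)>\nu$ is equivalent to $\bar{a}>0$.

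The second step is to recover $\omega$ for the unique $\upsilon$ just found. By \eqref{eq:logarithmic-time-maps},
\begin{equation*}
    \M_{\textup{I}}(\omega,\omega-\upsilon)=c(\upsilon)\,e^{\omega},
    \qquad
    c(\upsilon)\coloneqq\left(\frac{p-1}{pa_{+}}\right)^{\frac{1}{p}}\int_{0}^{(1-\mu)\upsilon}\xi^{-\frac{1}{p}}e^{-\xi}\dd\xi>0 .
\end{equation*}
The equation $c(\upsilon)e^{\omega}=\tau$ then has the unique solution $\omega=\log\bigl(\tau/c(\upsilon)\bigr)$. Unlike in the case $\gamma\neq-1$, no sign restriction on $\omega$ is needed, since $\log$ maps onto $\R$. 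This yields exactly one admissible pair, and hence a unique positive Neumann solution, whenever $\bar{a}>0$, and none otherwise.

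The periodic case then follows from Remark~\ref{remark:periodic-problem}, because the half-period interval has the same ratio $\tau/(T-\tau)$. The argument has no genuinely hard step, since the analysis was done in Lemma~\ref{lem:logarithmic-time-map-quotient}. The one point I would check carefully is the derivation of \eqref{eq:logarithmic-time-maps}, namely the substitutions and the factor $e^{\omega}$ separating out. This needs $\mathcal{L}_{h}(\xi)=-\bigl(\tfrac{p}{p-1}\bigr)^{1/p}\xi^{1/p}$ with the arguments $a_{+}(\omega-\theta)$ and $a_{-}(\theta-\sigma)$ nonnegative, which holds on the integration ranges since $\sigma<\mu\omega+(1-\mu)\sigma<\omega$.
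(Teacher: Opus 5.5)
Your proposal is correct and follows the paper's proof essentially step by step. Like the paper, you reduce to the scalar equation $\mathcal{Q}_{-1,p}(\upsilon)=\tau/(T-\tau)$, invoke Lemma~\ref{lem:logarithmic-time-map-quotient} for existence and uniqueness of $\upsilon$, recover $\omega$ uniquely from $\M_{\textup{I}}=\tau$ through the factor $e^{\omega}$, and transfer the result to the periodic problem via Remark~\ref{remark:periodic-problem}; your check of the substitutions behind \eqref{eq:logarithmic-time-maps} is also sound.
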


\begin{proof}
    In view of \eqref{eq:def-Q-minus-one-p}, the quotient equation for the time-map system is
    \begin{equation}
        \label{eq:quotient-equation-logarithmic-case}
        \mathcal{Q}_{-1,p}(\upsilon)
        =
        \frac{\tau}{T-\tau}.
    \end{equation}
    By Lemma~\ref{lem:logarithmic-time-map-quotient}, this equation has a solution $\upsilon>0$ if and only if \eqref{eq:existence-logarithmic-case} holds, and that solution is unique.

    For the uniquely determined $\upsilon$, the first equation in \eqref{eq:logarithmic-time-maps} gives
    \begin{equation}
        \label{eq:omega-logarithmic-case}
        e^{\omega}
        =
        \tau
        \left(
            \frac{p a_{+}}{p-1}
        \right)^{\frac{1}{p}}
        \left(
            \int_{0}^{(1-\mu)\upsilon}
            \xi^{-\frac{1}{p}}e^{-\xi}\dd\xi
        \right)^{-1}.
    \end{equation}
    Its right-hand side is finite and strictly positive, so it determines a unique $\omega\in\R$. Then it is possible to recover uniquely $\sigma=\omega-\upsilon$, and the time-map characterization in Proposition~\ref{prop:time-map-characterization-M} proves the assertion for the Neumann problem. The periodic problem follows from Remark~\ref{remark:periodic-problem}.
\end{proof}

By \eqref{eq:mean-weight-nu}, condition \eqref{eq:existence-logarithmic-case} is equivalent to $\bar{a}>0$. Since $\gamma=-1$, this is precisely the necessary sign condition $\gamma\bar{a}<0$ in \eqref{eq:necessary-sign-condition}. Thus, also in this case that condition is both necessary and sufficient.

\begin{remark}
    \label{remark:link-rho-upsilon}
    The connection between the parameters $\rho$ and $\upsilon$ used in the two subsections can be made explicit. For $\gamma\neq-1$, the definition \eqref{eq:def-rho-BFZ} and the primitive $G_{\gamma}(u)=u^{\gamma+1}/(\gamma+1)$ give
    \begin{equation}
        \label{eq:rho-upsilon-relation}
        \rho
        =
        \nu\left(\frac{\beta}{\alpha}\right)^{\gamma+1}
        =
        \nu e^{-(\gamma+1)\upsilon},
        \qquad
        \upsilon
        =
        -\frac{1}{\gamma+1}
        \log\left(\frac{\rho}{\nu}\right).
    \end{equation}
    Thus, the additive parameter $\upsilon$ can be regarded as a rescaled logarithmic deviation of $\rho$ from $\nu$.

    For fixed $\alpha>\beta>0$, however,
    \begin{equation}
        \rho\longrightarrow\nu
        \qquad\text{as }\gamma\to-1,
    \end{equation}
    independently of the value of $\alpha/\beta$. Thus the multiplicative parameter becomes constant and loses the information carried by $\upsilon$. This degeneration is consistent with \eqref{eq:logarithmic-primitive-limit}: obtaining $\log u$ from $G_{\gamma}(u)$ requires subtracting the divergent constant $1/(\gamma+1)$, and the homogeneous rescaling property of $G_{\gamma}$ is replaced at $\gamma=-1$ by the additive property of $\log(\cdot)$. Accordingly, relation \eqref{eq:rho-upsilon-relation} cannot be inverted at $\gamma=-1$, whereas the difference $\upsilon=\omega-\sigma$ remains well defined and recovers exactly the missing information.
\end{remark}

\section{Numerical evidence of multiplicity beyond two-step weights}
\label{sec:numerical-multiplicity}

The uniqueness statement in Theorem~\ref{thm:main-result} is obtained under the assumption that the weight is constant on each of its two intervals. It is therefore natural to ask whether the same conclusion can be obtained also for a weight with the same sign pattern, but with a non-constant geometry. The numerical example below suggests that this is not the case. More precisely, we exhibit a piecewise-polynomial weight with exactly one change of sign for which at least three distinct positive Neumann solutions are numerically detected.

We set $T=1$, $p=2$, and $\gamma=8$, and consider
\begin{equation}
    \label{eq:numerical-neumann-problem}
    u''+\widehat a(t)u^{8}=0,
    \qquad
    u'(0)=u'(1)=0,
\end{equation}
where
\begin{equation}
    \label{eq:def-numerical-weight}
    \widehat a(t)
    \coloneqq
    \begin{cases}
        \displaystyle
        \frac{1}{80}
        +\frac{6}{25}t\left(\frac14-t\right),
        & 0\leq t\leq\dfrac14,\\[3mm]
        \displaystyle
        \frac{1}{80},
        & \dfrac14<t\leq\dfrac{11}{20},\\[3mm]
        \displaystyle
        \frac{1}{80}
        +\frac{15}{4}
        \left[
            1-\left(\frac{40t-25}{3}\right)^{2}
        \right],
        & \dfrac{11}{20}<t\leq\dfrac{7}{10},\\[3mm]
        -375,
        & \dfrac{7}{10}<t\leq1.
    \end{cases}
\end{equation}
The weight $\widehat{a}(t)$ is depicted in Figure \textsc{\ref{fig:weight}}. Thus $\widehat{a}(t)$ is positive on $[0,7/10]$ and negative on $(7/10,1]$. In particular, it has the same sign pattern as the two-step weight in \eqref{eq:intro-two-step-weight}. On its positive interval, however, its graph consists of two parabolic arcs separated by a constant segment, followed by a jump to the negative plateau at $\tau=7/10$. Moreover,
\begin{equation}
    \label{eq:mean-numerical-weight}
    \int_{0}^{1}\widehat a(t)\dd t
    =-\frac{35877}{320}<0,
\end{equation}
so that the necessary sign condition \eqref{eq:necessary-sign-condition} is satisfied. The positive part of the weight contains two humps with very different amplitudes: their maxima are
\begin{equation}
    \widehat{a}\left(\frac18\right)=\frac{13}{800},
    \qquad
    \widehat{a}\left(\frac58\right)=\frac{301}{80}.
\end{equation}

The heuristic behind this choice can be seen by integrating \eqref{eq:numerical-neumann-problem}. The Neumann conditions require the balance
\begin{equation}
    \label{eq:numerical-balance-identity}
    \int_{0}^{1}\widehat a(t)u(t)^{8}\dd t=0.
\end{equation}
On the positive part of the weight the solution is concave and its derivative becomes negative, whereas the negative ending part makes the solution convex and may bring the derivative back to zero at $t=1$. The two separated positive humps are encountered at different values of the decreasing solution. Since the factor $u^{8}$ is strongly sensitive to the initial amplitude, varying $u(0)$ changes their relative contribution to \eqref{eq:numerical-balance-identity} and can destroy the monotonicity of the shooting map. The height of the second hump and the negative terminal plateau were chosen so that this balance is attained. This provides only a heuristic strategy: the presence of several critical points of a weight is not asserted to be sufficient for multiplicity in general.

More precisely, to describe the computation, let $A=u(0)>0$ and write
\begin{equation}
    u(t)=A v(t),
    \qquad
    \lambda=A^{7}.
\end{equation}
Problem \eqref{eq:numerical-neumann-problem} is thereby reduced to the normalized initial-value problem
\begin{equation}
    \label{eq:normalized-numerical-shooting}
    v''+\lambda\widehat{a}(t)v^{8}=0,
    \qquad
    v(0)=1,
    \qquad
    v'(0)=0.
\end{equation}
We then consider the shooting function
\begin{equation}
    \label{eq:def-numerical-shooting-map}
    F(\lambda)\coloneqq v'(1;\lambda),
\end{equation}
and every zero of $F$ for which the corresponding trajectory remains positive leads to a positive solution of the Neumann problem. Numerical computations yield three approximate zeros of $F$:
\begin{equation}
    \label{eq:three-numerical-roots}
    \lambda_{1}\simeq46.8687,
    \qquad
    \lambda_{2}\simeq151.2323,
    \qquad
    \lambda_{3}\simeq377.2199.
\end{equation}
The corresponding numerical values are collected in Table~\ref{tab:three-neumann-solutions}.

\begin{table}[htbp]
    \centering
    \renewcommand{\arraystretch}{1.15}
    \begin{tabular}{c|c|c|c}
        $i$ & $\lambda_i$ & $u_i(0)=\lambda_i^{1/7}$ & $u_i(1)$ \\
        \hline
        $1$ & $46.868703$  & $1.732597$ & $0.562974$ \\
        $2$ & $151.232321$ & $2.048226$ & $0.561611$ \\
        $3$ & $377.219872$ & $2.333914$ & $0.550030$
    \end{tabular}\vspace*{2mm}
    \caption{Numerical data for the three positive Neumann solutions of \eqref{eq:numerical-neumann-problem}.}
    \label{tab:three-neumann-solutions}
\end{table}

\begin{figure}
    \centering
    \begin{subfigure}[b]{0.49\textwidth}
        \centering
        \includegraphics[width=\linewidth]{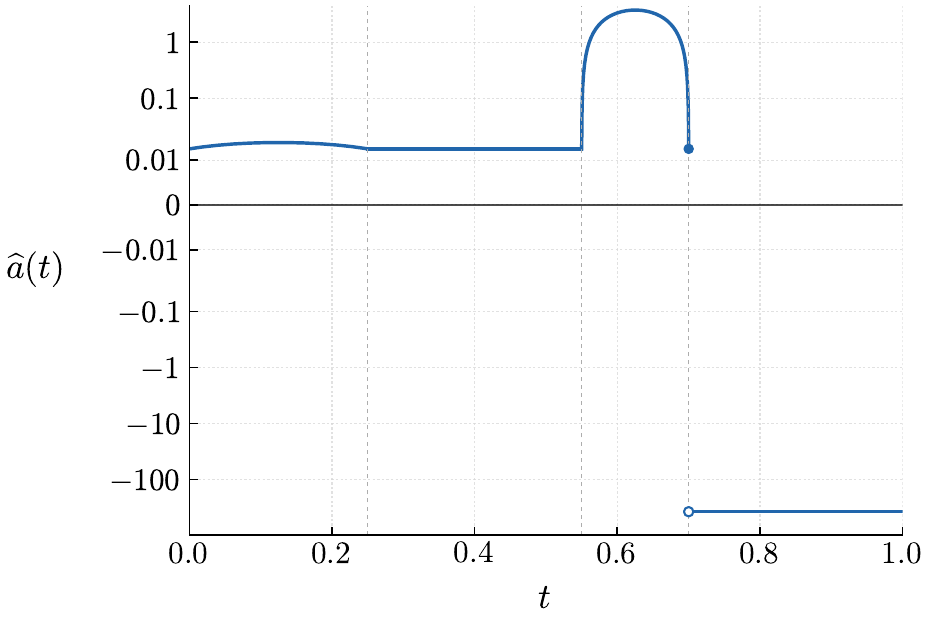}
        \caption{\label{fig:weight}}
    \end{subfigure}
    \begin{subfigure}[b]{0.49\textwidth}
        \centering
        \includegraphics[width=\linewidth]{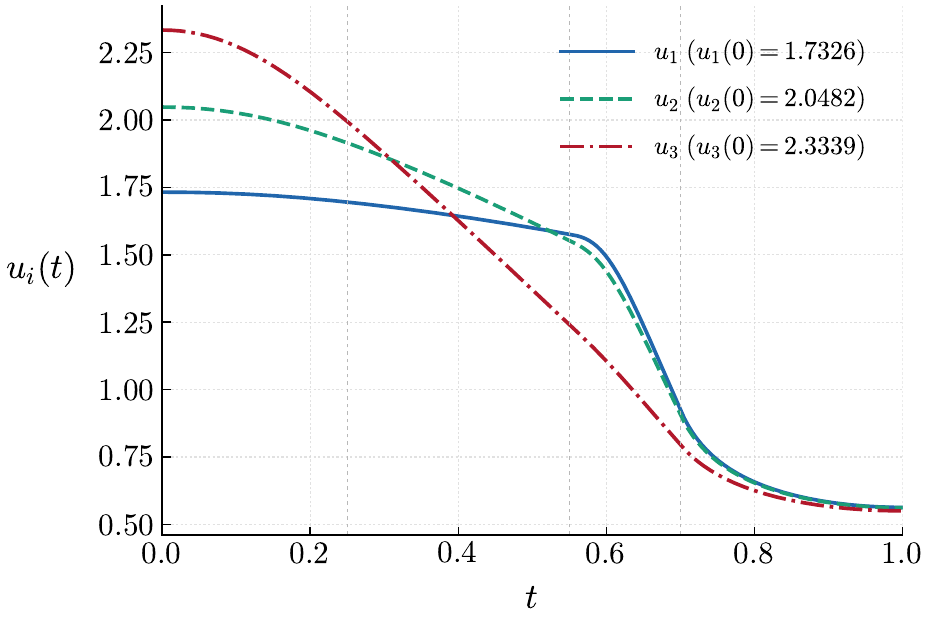}
        \caption{\label{fig:neumann-solutions}}
    \end{subfigure}
    
    \caption{\textsc{\ref{fig:weight}}) The indefinite weight $\widehat{a}(t)$, \textsc{\ref{fig:neumann-solutions}}) Plot of three solutions $u_{i}(t)$ for the Neumann problem \eqref{eq:numerical-neumann-problem} with weight $\widehat{a}(t)$.}
    \label{fig:numerical-neumann-multiplicity}
\end{figure}

These computations provide numerical evidence of at least three distinct positive solutions for equation  \eqref{eq:numerical-neumann-problem} (see Figure \textsc{\ref{fig:neumann-solutions}}), but they do not by themselves exclude further positive solutions. Nevertheless, this example suggests clearly that the uniqueness mechanism of Theorem~\ref{thm:main-result} cannot be expected to depend only on the number and order of the sign intervals and the rigid two-step geometry of the weight plays a pivotal role.\\

\textbf{Acknowledgements.} I would like to thank Guglielmo Feltrin and Paolo Gidoni for helpful and fruitful discussions on this problem and for their suggestions on improving the presentation of this work. A.C. is a member of the Gruppo Nazionale di Fisica Matematica of the Istituto Nazionale di Alta Matematica.

\end{document}